\documentclass[a4paper,12pt]{amsart}
\usepackage{amssymb}
\usepackage{cite}

\usepackage{ifthen}
\usepackage[dvips]{graphicx}
\nonstopmode \numberwithin{equation}{section}
\usepackage{amssymb}
\usepackage{ifthen}
\usepackage{graphicx}
\usepackage{amsmath}
\usepackage[T1]{fontenc} 
\usepackage[utf8]{inputenc}
\usepackage[usenames,dvipsnames]{color}
\usepackage{color}
\usepackage[english]{babel}
\usepackage{fancyhdr}
\usepackage{fancybox}
\usepackage{tikz}

\theoremstyle{plain}
\newtheorem{prop}{Proposition}

\newtheorem{conj}{Conjecture}

\theoremstyle{definition}

\newtheorem{cor}{Corollary}[section]
\newtheorem{thm}{Theorem}[section]

\newtheorem{lem}{Lemma}[section]
\newtheorem{prob}{Problem}[section]
\newtheorem{rem}{Remark}[section]

\theoremstyle{plain}

\newtheorem*{thmB}{Theorem B}
\newtheorem*{thmC}{Theorem C}
\newtheorem*{thmD}{Theorem D}
\newtheorem*{thmE}{Theorem E}

\newtheorem*{lemA}{Lemma A}
\newtheorem*{lemB}{Lemma B}

\newcounter{minutes}
\divide\time by 60
\newcounter{hours}
\multiply\time by 60
\addtocounter{minutes}{-\time}

\newcounter {own}
\def\theown {\thesection       .\arabic{own}}

\newenvironment{pf}[1][]{%
	\vskip 3mm
	\noindent
	\ifthenelse{\equal{#1}{}}%
	{{\slshape Proof. }}%
	{{\slshape #1.} }%
}%
{\qed\bigskip}

\newcounter{alphabet}

\def\be{\begin{equation}}
	\def\ee{\end{equation}}

\newcommand{\bee}{\begin{enumerate}}
	\newcommand{\eee}{\end{enumerate}}

\newcommand{\blem}{\begin{lem}}
	\newcommand{\elem}{\end{lem}}
\newcommand{\bthm}{\begin{thm}}
	\newcommand{\ethm}{\end{thm}}
\newcommand{\bcor}{\begin{cor}}
	\newcommand{\ecor}{\end{cor}}
\newcommand{\beg}{\begin{examp}}
	\newcommand{\eeg}{\end{examp}}
\newcommand{\begs}{\begin{examples}}
	\newcommand{\eegs}{\end{examples}}

\newcommand{\bdefn}{\begin{defn}}
	\newcommand{\edefn}{\end{defn}}

\newcommand{\bprob}{\begin{prob}}
	\newcommand{\eprob}{\end{prob}}
\newcommand{\bei}{\begin{itemize}}
	\newcommand{\eei}{\end{itemize}}

\newcommand{\bcon}{\begin{conj}}
	\newcommand{\econ}{\end{conj}}
\newcommand{\bcons}{\begin{conjs}}
	\newcommand{\econs}{\end{conjs}}
\newcommand{\bprop}{\begin{prop}}
	\newcommand{\eprop}{\end{prop}}
\newcommand{\br}{\begin{rem}}
	\newcommand{\er}{\end{rem}}
\newcommand{\brs}{\begin{rems}}
	\newcommand{\ers}{\end{rems}}
\newcommand{\bo}{\begin{obser}}
	\newcommand{\eo}{\end{obser}}
\newcommand{\bos}{\begin{obsers}}
	\newcommand{\eos}{\end{obsers}}
\newcommand{\bpf}{\begin{pf}}
	\newcommand{\epf}{\end{pf}}
\newcommand{\ba}{\begin{array}}
	\newcommand{\ea}{\end{array}}
\newcommand{\beq}{\begin{eqnarray}}
	\newcommand{\beqq}{\begin{eqnarray*}}
		\newcommand{\eeq}{\end{eqnarray}}
	\newcommand{\eeqq}{\end{eqnarray*}}

\begin{document}

\title{{Bohr-Type Inequalities for Shifted Disks via Optimal $H^2$-Embeddings}}

\author{Molla Basir Ahamed}
\address{Molla Basir Ahamed, Department of Mathematics, Jadavpur University, Kolkata-700032, West Bengal, India.}
\email{mbahamed.math@jadavpuruniversity.in}

\author{Vasudevarao Allu$^*$}
\address{Vasudevarao Allu, Department of Mathematics, School of Basic Science, Indian Isntitute of Technology Bhubaneswar, Bhubaneswar-752050, odisha, India.}
\email{avrao@iitbbs.ac.in}

\author{Rajesh Hossain}
\address{Rajesh Hossain, Department of Mathematics, Jadavpur University, Kolkata-700032, West Bengal, India.}
\email{rajesh1998hossain@gmail.com}

\author{Taimur Rahman}
\address{Taimur Rahman, Department of Mathematics, Jadavpur University, Kolkata-700032, West Bengal, India.}
\email{taimurr.math.rs@jadavpuruniversity.in}

\subjclass[{AMS} Subject Classification:]{{Primary 30A10, 30H05, 30C35 Secondary:30C55, 41A58.}}
\keywords{{Analytic functions, Schwarz lemma, Bohr’s inequality, unimodular bounded functions, shifted disk, half plan}}
\def\thefootnote{}
\footnotetext{ {\tiny File:~\jobname.tex,
printed: \number\year-\number\month-\number\day,
          \thehours.\ifnum\theminutes<10{0}\fi\theminutes }
} \makeatletter\def\thefootnote{\@arabic\c@footnote}\makeatother
\maketitle
\begin{abstract} 
 The primary objective of this paper is to systematically generalize this phenomenon by replacing the standard unit disk with a family of nested, internally tangent shifted disks $\Omega_\gamma$ parameterized by $\gamma \in [0, 1)$, defined by$$\Omega_\gamma = \left\{ z \in \mathbb{C} : \left| z + \frac{\gamma}{1 - \gamma} \right| < \frac{1}{1 - \gamma},\; \gamma\in [0, 1) \right\}.$$ By exploiting the geometric characteristics of $\Omega_\gamma$ and evaluating the limiting behavior as $\gamma \to 1^-$, we establish a novel framework to determine the Bohr radius for the unbounded half-plane $\mathbb{H}_1 = \{z \in \mathbb{C} : \text{Re}(z) < 1\}$. Furthermore, we prove several sharp variations of the Bohr inequality within these domains, including refined and improved formulations for unimodular bounded analytic functions. The results obtained herein not only extend classical radius problems to unbounded regions but also illuminate the delicate interplay between domain deformation and coefficient estimates.
\end{abstract}
\pagestyle{myheadings}
\markboth{M. B. Ahamed, V. Allu, R. Hossain and T. Rahaman}{Asymptotic deformation of shifted disks and optimal $H^2$-norm embeddings in Bohr-type inequalities}
\section{\bf Introduction}\label{sec-1}
Let $\mathbb{D}_r(a) = \{z \in \mathbb{C} : \vert{}z - a\vert{} < r\}$ denote the open disk centered at $a \in \mathbb{C}$ with radius $r > 0$, and set $\mathbb{D} := \mathbb{D}_1(0)$ as the open unit disk in the complex plane $\mathbb{C}$. Let $\Omega \subseteq \mathbb{C}$ be a simply connected domain such that $\mathbb{D} \subseteq \Omega$, and denote by $\mathcal{H}(\Omega)$ the space of analytic functions on $\Omega$. We consider the subclass of bounded analytic functions given by$$\mathcal{B}(\Omega) = \{f \in \mathcal{H}(\Omega) : f(\Omega) \subseteq \overline{\mathbb{D}}\}.$$For any function $f \in \mathcal{B}(\Omega)$, its localized representation in $\mathbb{D}$ is given by the Taylor series expansion $f(z) = \sum_{n=0}^{\infty} \alpha_n z^n$. The associated majorant series is defined as $M_f(r) = \sum_{n=0}^{\infty} \vert{}\alpha_n\vert{} r^n$ for $r = \vert{}z\vert{} < 1$. Following Fournier and Ruscheweyh \cite{Fournier-Ruscheweyh-CRMPLN-2010}, the Bohr radius $\mathcal{B}_\Omega$ \cite{Bohr-PLMS-1914} associated with the class $\mathcal{B}(\Omega)$ is defined by
\begin{align*}
	\mathcal{B}_\Omega = \sup \left\{ r \in (0, 1) : M_f(r) \le 1 \text{ for all } f(z) = \sum_{n=0}^{\infty} \alpha_n z^n \in \mathcal{B}(\Omega) \right\}.
\end{align*}\vspace*{2mm}

In particular, it is well-known that when $\Omega = \mathbb{D}$, $\mathcal{B}_\mathbb{D} = 1/3$. This is described as follows:

\medskip
\noindent
{\bf Theorem A.} {\it (The classical Bohr theorem) If $f \in \mathcal{B}(\mathbb{D})$, then $M_f(r) \le 1$ for $0 \le r \le 1/3$. The number $1/3$ is best possible.}

\begin{rem}
	It should be noted that for $r > 1/3$, the inequality $M_f(r) \le 1$ does not hold in general for all $f \in \mathcal{B}(\mathbb{D})$. To verify the sharpness of the radius $r = 1/3$, let us consider the conformal automorphism of the unit disk $\mathbb{D}$ defined by
	$$ \varphi_a(z) = \frac{a - z}{1 - az}, \quad a \in [0, 1). $$
	For $|az| < 1$, the function $\varphi_a$ admits the power series expansion
	$$ \varphi_a(z) = a - (1 - a^2)\sum_{n=1}^{\infty} a^{n-1} z^n. $$
	Consequently, the associated majorant function $M_{\varphi_a}(r)$ is given by
	$$ M_{\varphi_a}(r) = a + (1 - a^2)\sum_{n=1}^{\infty} a^{n-1} r^n = a + \frac{(1 - a^2)r}{1 - ar}. $$
	A straightforward calculation reveals that the inequality $M_{\varphi_a}(r) > 1$ is satisfied if, and only if,
	$$ a + \frac{(1 - a^2)r}{1 - ar} > 1, $$
	which simplifies directly to
	$$ r > \frac{1}{1 + 2a}. $$
	Taking the limit as $a \to 1^-$, the bound ${1}/{(1 + 2a)}$ approaches $1/3$. This establishes the optimality of the radius $1/3$.
\end{rem}
Historically, Bohr \cite{Bohr-PLMS-1914} originally established Theorem A for the restricted radius $r \le 1/6$. The sharp radius $r = 1/3$, widely recognized as the classical Bohr radius for the class of analytic self-maps $\mathcal{B}(\mathbb{D})$, was subsequently obtained independently by Riesz, Schur, and Wiener. Alternative proofs confirming this optimal value were later provided by Sidon \cite{Sidon-MZ-1927} and Tomic \cite{Tomic-MS-1962}. For a comprehensive survey of developments, generalizations, and recent advancements regarding the Bohr radius, {we refer to} \cite{Abu-Muhanna-Ali-Ponnusamy-2017,Ahamed-Ahammed-Hamada-AMS-2026,Ahamed-Roy-PMS-2025,Ahammed-Ahamed-CVEE-2023,Ahammed-Ahamed-CMB-2024, Alkhaleefah-Kayumov-Ponnusamy-PAMS-2019, Ahamed-Allu-RMJM-2022, Ahamed-Allu-CMB-2023} and the extensive references therein.\vspace*{2mm}

By {using} the classical Bohr inequality, Dixon \cite{Dixon-BLMS-1995} constructed a counterexample confirming the existence of a Banach algebra that satisfies a non-unital analogue of von Neumann's inequality but fails to be isomorphic to an operator algebra. This seminal work revitalized interest in the Bohr radius problem, revealing profound connections to modern functional analysis and operator theory. Subsequently, Boas and Khavinson \cite{Boas-Khavinson-PAMS-1997} systematically generalized the classical formulation to the setting of several complex variables, thereby introducing the multi-dimensional Bohr radius. Over the past few decades, this geometric phenomenon has been extensively investigated and extended across diverse structural frameworks, including abstract Banach spaces, Dirichlet series, and generalized complex domains. For a comprehensive overview of these multi-dimensional developments and their applications, we refer the to \cite{Aizenberg-PAMS-2000, Defant-Frerick-Ortega-Cerda-Ounaies-Seip-AM-2011} and the extensive references therein.\vspace*{2mm}

For $\gamma\in [0, 1)$, we consider the open disk $\Omega_\gamma$ which is now known as shifted disks defined by
\begin{align*}
	\Omega_\gamma = \left\{ z \in \mathbb{C} : \left| z + \frac{\gamma}{1 - \gamma} \right| < \frac{1}{1 - \gamma},\; \gamma\in [0, 1) \right\}.
\end{align*}
Because the disks expand entirely to the left while sharing a common rightmost boundary point at $z = 1$, they exhibit a strict nested inclusion behavior. {In particular,} 
	\begin{align*}
		\text{if } 0 \le \gamma_1 < \gamma_2 < 1, \quad \text{then } \Omega_{\gamma_1} \subsetneq \Omega_{\gamma_2}.
	\end{align*} For $\gamma = 0$, we get the standard open unit disk  $\Omega_0 = \{z \in \mathbb{C} : |z| < 1\} = \mathbb{D}$. Since $\Omega_0 \subseteq \Omega_\gamma$ for all $\gamma \in [0, 1)$, every shifted disk in this family contains the unit disk {$\mathbb{D}$}. Thus, the boundary circles $\partial\Omega_\gamma$ converge to the vertical line $\text{Re}(z) = 1$. Since the interior of each disk lies to the left of this boundary, the union of all disks as $\gamma \to 1^-$ completely fills the open half-plane 
	\begin{align*}
		\lim_{\gamma \to 1^-} \Omega_\gamma = \{ z \in \mathbb{C} : \text{Re}(z) < 1 \}:=\mathbb{H}_1.
\end{align*}
\begin{figure}[htbp]
	\centering
	\includegraphics[width=0.65\textwidth]{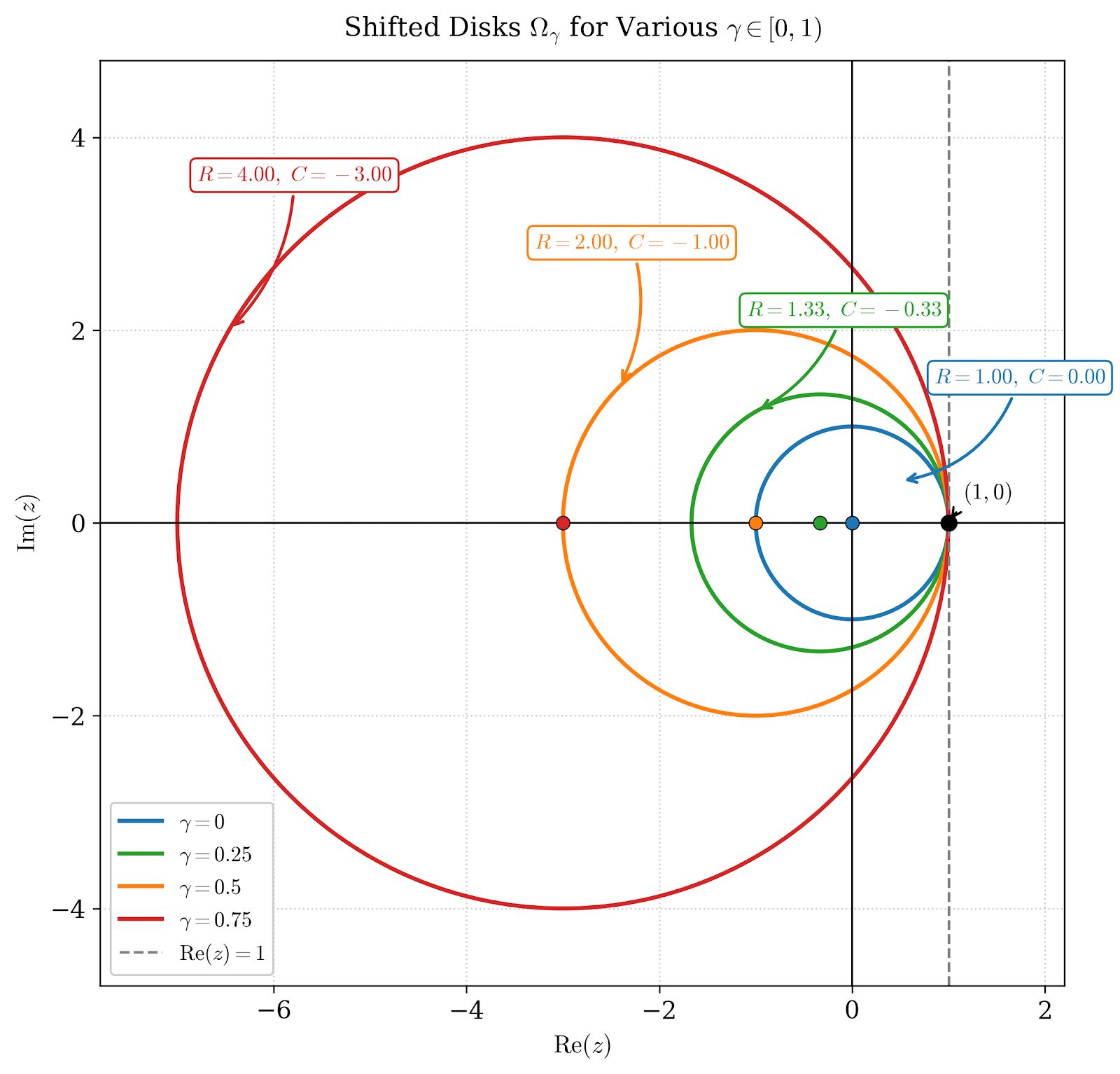}
	\caption{The shifted disks $\Omega_\gamma$ for various values of $\gamma$, sharing the common boundary point $(1,0)$. Each disk $\Omega_\gamma$ has a well-defined center $c(\gamma)$ and radius  $c(\gamma) = -\frac{\gamma}{1 - \gamma}$ and $R(\gamma) = \frac{1}{1 - \gamma}$, respectively. As $\gamma$ increases from $0$ towards $1$, the center $c(\gamma)$ moves leftward along the negative real axis towards $-\infty$, and the radius $R(\gamma)$ grows infinitely large.}
\end{figure}
The classical Bohr radius and its multi-dimensional analogues have been extensively investigated for the unit disk, the polydisk, and the unit ball of various Banach spaces (see, for instance, \cite{Boas-Khavinson-PAMS-1997} and references therein), the study of this geometric phenomenon over unbounded domains remains less developed.\vspace{1.2mm}
\begin{figure}[htbp]
	\centering
	\includegraphics[width=0.57\textwidth]{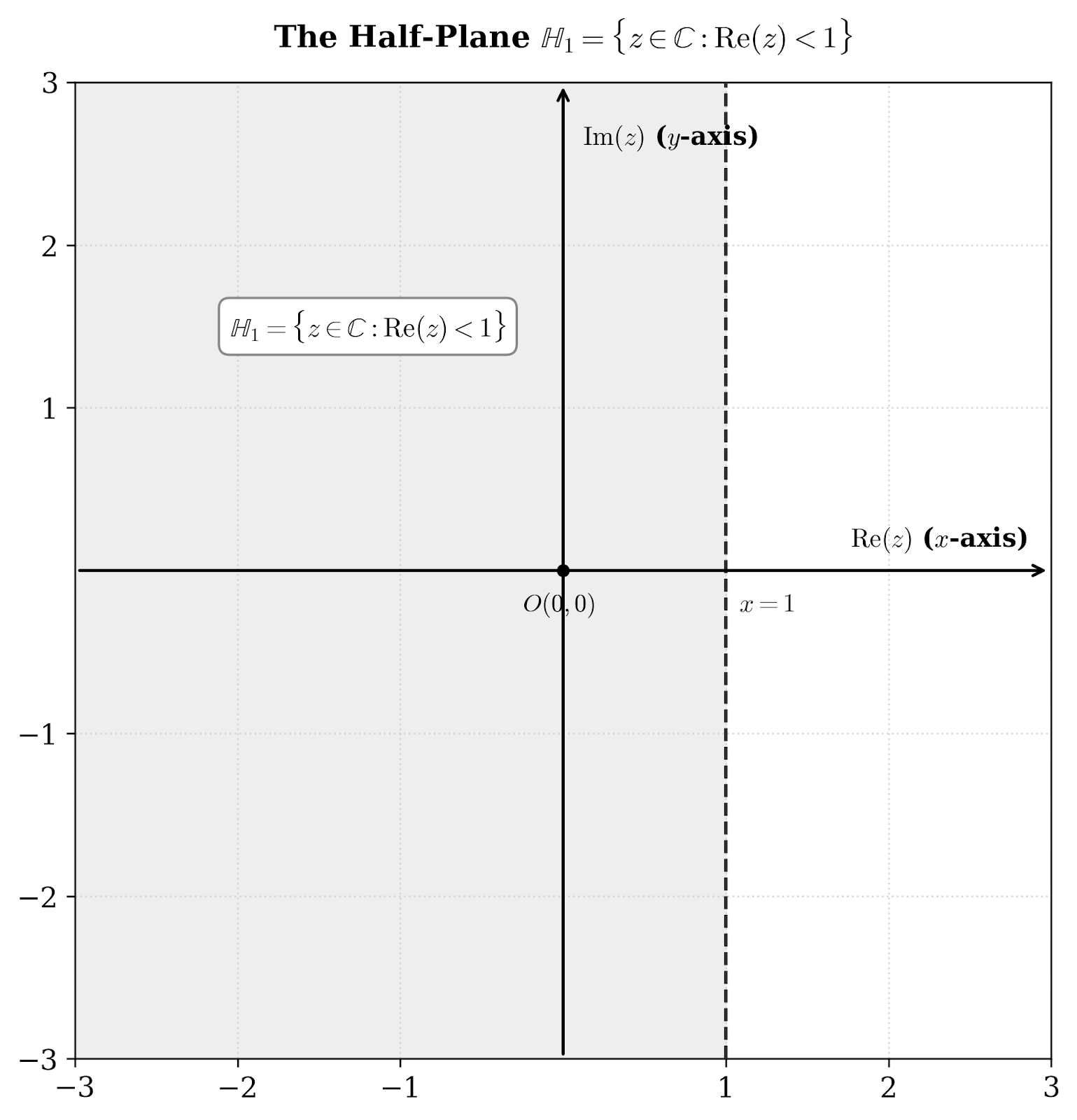}
	\caption{Representation of the open half-plane $\mathbb{H}_1 = \{ z \in \mathbb{C} : \text{Re}(z) < 1 \}$ in the complex plane. The shaded light-blue region represents all complex numbers with a real part strictly less than $1$. The dashed vertical line at $\text{Re}(z) = 1$ indicates that the boundary is excluded from the set, making $\mathbb{H}_1$ an open set.}
\end{figure}

In light of these foundational studies, it is natural to pose the following question regarding the asymptotic behavior of power series over asymmetric or unbounded regions.
	\begin{prob}What can be said about the Bohr radius for the class of analytic functions right-bounded half-plane $\mathbb{H}_1$?
\end{prob}
In the shifted disk $\Omega_\gamma$, {Evdoridis \emph{et al.} \cite{Evdoridis-Ponnusamy-Rasila-RM-2021}} studied improved versions of the classical Bohr inequality for analytic and harmonic mappings, with restrictions to the unit disk $\mathbb{D}$. Subsequently, Ahamed \emph{et al.} \cite{Ahamed-Allu-Halder-AFM-2022} established a number of sharp improved and refined versions of Bohr's inequality for analytic functions in $\Omega_\gamma$, also restricted to the unit disk $\mathbb{D}$.\vspace*{1.2mm}

In this paper, motivated by the work of {Fournier and Ruscheweyh \cite{Fournier-Ruscheweyh-CRMPLN-2010}}, we define the Bohr radius for the class $\mathcal{B}(\Omega_\gamma)$. It is defined as the number $\mathcal{B}_{\Omega_\gamma} \in (0, 1)$ such that
\[
\mathcal{B}_{\Omega_\gamma} = \sup \left\{ \rho \in (0, 1) : M_f(\rho) \le 1 \text{ for } f(z) = \sum_{n=0}^{\infty} \alpha_n \left( z + \frac{\gamma}{1 - \gamma} \right)^n \in \mathcal{B}(\Omega_\gamma), \, z \in \Omega_\gamma \right\},
\]
where 
\[
M_f(\rho) = \sum_{n=0}^{\infty} |\alpha_n| \left( \frac{\rho}{1 - \gamma} \right)^n\quad \text{with } |\gamma + (1 - \gamma)z| = \rho,
\]
is the majorant series associated with the analytic functions $f \in \mathcal{B}(\Omega_\gamma)$.\vspace*{2mm}

The primary objective of the present manuscript is to systematically generalize and establish refined variations of the classical Bohr inequality in the settings of the nested, internally tangent shifted disks $\Omega_\gamma$. By employing specialized conformal mapping methodologies and exploring the asymptotic behavior as the deformation parameter $\gamma \to 1^-$, we establish a rigorous framework to determine the Bohr radius for the right-bounded half-plane $\mathbb{H}_1$. Furthermore, this work focuses on establishing sharp Bohr-type majorant optimization bounds for unimodular bounded analytic functions that vanish at the shifted origin, thereby illuminating the structural invariance and delicate interplay between domain deformation and coefficient estimates in geometric function theory.\vspace{1.2mm}

To achieve these aims, the remainder of this paper is organized as follows. Section \ref{sec-2} provides essential auxiliary tools and preparatory lemmas, collecting foundational coefficient bounds and classical inequalities for analytic functions bounded in the unit disk. In Section \ref{sec-3}, we establish an improved majorant expansion technique for functions vanishing at the shifted origin, which serves as the base for our core results. Utilizing this, we systematically extend and improve the three classical coefficient inequalities originally obtained by Ponnusamy \emph{et al.} \cite{Ponnusamy-Vijayakumar-Wirths-RM-2020}. Specifically, we formulate and prove three general sharp Bohr-type inequalities (Theorems 3.1, 3.2, and 3.3) parameterized by the shifting index $\gamma$ and the zero multiplicity $k$, and discuss the corresponding sharp asymptotic radius via analytical and numerical evaluations of the underlying extremal polynomials.

\section{\bf Some preliminary results}\label{sec-2}
To establish our main results concerning the sharp Bohr-type radius bounds on the shifted disks $\Omega_\gamma$, we require several foundational lemmas and classical coefficient inequalities for the class $\mathcal{B}(\mathbb{D})$ of analytic self-maps of the unit disk {$\mathbb{D}$}. These auxiliary results provide the necessary estimates for the derivatives and Taylor coefficients of bounded analytic functions. We begin by recalling a classic distortion-type estimate for bounded analytic functions, which bounds the growth of high-order derivatives at any arbitrary point within the unit disk.
\begin{lemA}\label{Lem-A}\cite{Ruscheweyh-Serdica-1985}
	For $f \in \mathcal{B}(\mathbb{D})$, we have
	\[
	\frac{|f^{(n)}(\alpha)|}{n!} \le \frac{1 - |f(\alpha)|^2}{(1 - |\alpha|)^{n-1}(1 - |\alpha|^2)}
	\]
	for each $n \ge 1$ and $\alpha \in \mathbb{D}$.
\end{lemA}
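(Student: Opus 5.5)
Fix $\alpha\in\mathbb{D}$ and $n\ge 1$. The plan is to move $\alpha$ to the origin by a disk automorphism, apply Wiener's coefficient inequality $|b_m|\le 1-|b_0|^2$ ($m\ge1$) to the transported function, and then return to $\alpha$ via the Taylor expansion of the automorphism. If $|f(\alpha)|=1$, then $f$ is a unimodular constant by the maximum principle, so all derivatives vanish and there is nothing to prove. Hence we may assume $f(\mathbb{D})\subseteq\mathbb{D}$.

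Set $\varphi(w)=(w+\alpha)/(1+\bar\alpha w)$ and $g=f\circ\varphi\in\mathcal{B}(\mathbb{D})$, with $g(w)=\sum_{m\ge0} b_m w^m$ and $b_0=f(\alpha)$. Wiener's inequality applies to $g$. To prove it, write $g=(b_0+\omega)/(1+\bar b_0\omega)$ with $\omega$ a self-map satisfying $\omega(0)=0$. A rotation average over $m$-th roots of unity reduces the case of general $m$ to $m=1$, and for $m=1$ the bound is Schwarz–Pick. This gives $|b_m|\le 1-|f(\alpha)|^2$ for every $m\ge1$.

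Next, $f=g\circ\varphi^{-1}$, where $\psi:=\varphi^{-1}(z)=(z-\alpha)/(1-\bar\alpha z)$. Hence
\[
\frac{f^{(n)}(\alpha)}{n!}=\sum_{m=1}^{n} b_m\,[ (z-\alpha)^n ]\,\psi(z)^m .
\]
Write $\psi(z)=(z-\alpha)\cdot\frac{1}{1-|\alpha|^2}\cdot\frac{1}{1-\frac{\bar\alpha}{1-|\alpha|^2}(z-\alpha)}$. This expresses $\psi^m$ as $(z-\alpha)^m(1-|\alpha|^2)^{-m}$ times a geometric series in $t=z-\alpha$ with ratio $\bar\alpha/(1-|\alpha|^2)$. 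Taking absolute values and using the Wiener bound,
\[
\frac{|f^{(n)}(\alpha)|}{n!}\le (1-|f(\alpha)|^2)\sum_{m=1}^n \binom{n-1}{m-1}\frac{|\alpha|^{n-m}}{(1-|\alpha|^2)^{n}}=(1-|f(\alpha)|^2)\frac{(1+|\alpha|)^{n-1}}{(1-|\alpha|^2)^n}.
\]
The binomial coefficient $\binom{n-1}{m-1}$ is the coefficient of $t^{n-m}$ in $(1-ct)^{-m}$ with $|c|=|\alpha|/(1-|\alpha|^2)$. The factor $(1-|\alpha|^2)^{-m}\cdot(1-|\alpha|^2)^{-(n-m)}$ combines to $(1-|\alpha|^2)^{-n}$. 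Since $(1+|\alpha|)^{n-1}/(1-|\alpha|^2)^{n-1}=(1-|\alpha|)^{-(n-1)}$, the right-hand side is exactly $(1-|f(\alpha)|^2)/\big((1-|\alpha|)^{n-1}(1-|\alpha|^2)\big)$, the bound claimed in Lemma A.

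The main obstacle is bookkeeping the coefficient extraction correctly. Concretely, we must verify that $[t^{n}]\psi^m=(1-|\alpha|^2)^{-n}\binom{n-1}{m-1}\bar\alpha^{\,n-m}$ and that the triangle inequality loses nothing essential. If the direct expansion gets messy, the fallback is to apply Cauchy's estimate to $f(\alpha+t)-f(\alpha)$ on the disk $|t|<1-|\alpha|$. That route gives a cruder bound, so the Wiener-plus-composition argument above is the one to pursue.
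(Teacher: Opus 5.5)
The paper gives no proof of Lemma A. It quotes the lemma from Ruscheweyh and never invokes it later; the arguments in Section~3 rest only on Wiener's estimate and on the inequality from the proof of Theorem~E. So there is no in-paper argument to compare with, and your proof has to stand on its own. It does, and it is correct and complete.

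The key checks all go through:
\begin{itemize}
\item With $t=z-\alpha$ one has $1-\bar\alpha z=(1-|\alpha|^2)\bigl(1-\tfrac{\bar\alpha}{1-|\alpha|^2}t\bigr)$. Hence $[t^n]\psi^m=\binom{n-1}{m-1}\bar\alpha^{\,n-m}(1-|\alpha|^2)^{-n}$ for $1\le m\le n$.
\item Extracting coefficients termwise from $f=\sum_m b_m\psi^m$ is legitimate: the series converges locally uniformly in $\mathbb{D}$, and $\psi^m=O(t^m)$, so only $m\le n$ contributes.
\item The identity $\sum_{m=1}^{n}\binom{n-1}{m-1}|\alpha|^{n-m}=(1+|\alpha|)^{n-1}$ turns the triangle-inequality bound into exactly $(1-|f(\alpha)|^2)/\bigl((1-|\alpha|)^{n-1}(1-|\alpha|^2)\bigr)$.
\end{itemize}
A compact packaging of the same computation: $\psi$ is dominated coefficientwise in $t$ by $\Psi(t)=t/(1-|\alpha|^2-|\alpha|t)$. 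Therefore $|f^{(n)}(\alpha)|/n!\le(1-|f(\alpha)|^2)\,[t^n]\tfrac{\Psi}{1-\Psi}$. Since $\tfrac{\Psi}{1-\Psi}=\tfrac{t}{(1+|\alpha|)(1-|\alpha|-t)}$, its $t^n$-coefficient is $\tfrac{1}{(1+|\alpha|)(1-|\alpha|)^{n}}$, which is the claimed bound.

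Minor points:
\begin{itemize}
\item The coefficient of $t^{n-m}$ in $(1-ct)^{-m}$ is $\binom{n-1}{m-1}c^{\,n-m}$, not $\binom{n-1}{m-1}$ alone. Your final formula has it right.
\item In your sketch of Wiener's inequality, the representation through $\omega$ is unnecessary. Average $g$ itself over the $m$-th roots of unity $\epsilon^j$ to get $\tfrac1m\sum_j g(\epsilon^j w)=h(w^m)$. Here $h$ is a self-map with $h(0)=b_0$ and $h'(0)=b_m$, and Schwarz--Pick at $0$ finishes it.
\item The case $|f(\alpha)|=1$ need not be separated. Nothing has to be ``lossless'' in the triangle inequality either, since only an upper bound is wanted.
\end{itemize}

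Your assessment of the fallback is right and worth keeping. Rescale $\{|z-\alpha|<1-|\alpha|\}$ affinely onto $\mathbb{D}$ and apply Wiener's estimate. This is exactly the mechanism of the paper's proof of Lemma~\ref{Lem-3.1}, where nothing is lost because the expansion point is the centre of $\Omega_\gamma$. Here it yields only $(1-|f(\alpha)|^2)/(1-|\alpha|)^{n}$. That is off by the factor $1+|\alpha|$ and already misses Schwarz--Pick at $n=1$. The M\"obius change of variables is what recovers that factor.
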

Lemma B was introduced by Allu \emph{et al.} \cite{Allu-Biswas-Mandal-arXiv-2026} to establish results for functions in the shifted disk.
\begin{lemB}\label{Lem-B}\cite{Allu-Biswas-Mandal-arXiv-2026}
	Let $f$ be analytic in $\Omega_\gamma$, bounded by $1$ with the series expansion 
	$$f(z) = \sum_{n=0}^{\infty} a_n \left( z+\frac{\gamma}{1 - \gamma} \right)^n$$ 
	in $\Omega_\gamma$. Then, $|a_n| \le (1 - \gamma)^n (1 - |a_0|^2)$ for $n \ge 1$.
\end{lemB}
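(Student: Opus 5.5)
Since Lemma B is quoted from the literature, I would give a short self-contained argument that reduces it to Lemma A (equivalently, to Wiener-type coefficient bounds) by a change of variables. Set $c=\gamma/(1-\gamma)$ and $R=1/(1-\gamma)$, so $\Omega_\gamma=\mathbb{D}_R(-c)$. Define
\[
g(w)=f\bigl(Rw-c\bigr),\qquad w\in\mathbb{D}.
\]
Then $g$ is analytic in $\mathbb{D}$ with $|g|\le 1$, so $g\in\mathcal{B}(\mathbb{D})$. Substituting $z+c=Rw$ into the expansion of $f$ gives
\[
g(w)=\sum_{n\ge0}a_nR^nw^n .
\]
Hence the Taylor coefficients of $g$ at the origin are $b_n=a_nR^n=a_n(1-\gamma)^{-n}$, and $b_0=a_0$.

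Next I apply Lemma A with $\alpha=0$. This gives $|g^{(n)}(0)|/n!\le 1-|g(0)|^2$, that is, $|b_n|\le 1-|b_0|^2$ for every $n\ge1$. This is the classical Wiener estimate. If one prefers not to invoke Lemma A, it follows directly: write $g=(b_0-\phi)/(1-\overline{b_0}\phi)$ with $\phi(0)=0$, or average $g(w)$ against the $n$-th roots of unity to reduce to the case $n=1$, which is Schwarz--Pick.

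Undoing the scaling gives $|a_n|=|b_n|R^{-n}\le(1-\gamma)^n(1-|a_0|^2)$, which is the claim. The degenerate case $|a_0|=1$ needs no separate treatment: by the maximum principle $f$ is then constant, so $a_n=0$ for $n\ge1$, consistent with the bound.

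The only point requiring care is that the expansion of $f$ about $-c$ converges on the whole disk $\Omega_\gamma$. This holds because $\Omega_\gamma$ is exactly the disk of radius $R$ centred at $-c$ and $f$ is analytic there, so the rescaled series for $g$ converges on all of $\mathbb{D}$. This justifies identifying the $b_n$ with the Taylor coefficients of $g$. I expect no genuine obstacle; the main thing to get right is the scaling factor $(1-\gamma)^n$.
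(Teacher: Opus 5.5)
Your proof is correct and is essentially the paper's argument. The paper proves this as the case $k=0$ of Lemma~\ref{Lem-3.1}: it composes $f$ with the same affine map $\Phi(\xi)=(\xi-\gamma)/(1-\gamma)$ of $\mathbb{D}$ onto $\Omega_\gamma$, reads off the coefficients $a_n(1-\gamma)^{-n}$, and applies Wiener's estimate $|b_n|\le 1-|b_0|^2$. You additionally justify that the expansion about $-\gamma/(1-\gamma)$ converges on all of $\Omega_\gamma$, which the paper leaves implicit.
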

Our principal aim is to compare the majorant series $\sum_{n=0}^{\infty} |a_{n}| r^{n}$ with another classical structural functional frequently considered in geometric function theory, namely the square of the $H^2$-norm defined by 
$$ \|f\|_{r}^{2} = \sum_{n=0}^{\infty} |a_{n}|^{2} r^{2n}. $$
In \cite{Ponnusamy-Vijayakumar-Wirths-RM-2020}, Ponnusamy \emph{et al.} provided a sharp refinement of the classical inequality $\|f\|_{r}^{2} \le 1$ for $f \in \mathcal{B}(\mathbb{D})$ by explicitly embedding this $H^2$-norm into the upper bound of the corresponding majorant series. This foundational formulation is stated below.
\begin{thmB}\cite[Theorem 1]{Ponnusamy-Vijayakumar-Wirths-RM-2020}
	Suppose that $f \in \mathcal{B}$ and $f(z) = \sum_{n=0}^{\infty} a_n z^n$. Then the inequality
	$$\sum_{n=0}^{\infty} |a_n| r^n \le \frac{1}{1 - r} \left( 1 - r \|f\|^2_{r} \right)$$
	is valid for $r \in [0, 1)$. Equality is attained for $f(z) = 1$, $z \in \mathbb{D}$.
\end{thmB}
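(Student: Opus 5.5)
Set $a=|a_0|\in[0,1]$. The plan is to handle the zeroth coefficient separately and bound the tail $\sum_{n\ge1}|a_n|r^n$ in two different ways: once with the Wiener-type coefficient estimate and once with a Cauchy–Schwarz/Parseval estimate. Combining them should produce the stated right-hand side
$$\frac{1}{1-r}\bigl(1-r\|f\|_r^2\bigr).$$

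\emph{Step 1.} Multiplying the target inequality by $1-r$, it becomes
$$(1-r)\sum_{n\ge0}|a_n|r^n + r\sum_{n\ge0}|a_n|^2r^{2n}\le 1 .$$
The $n=0$ terms contribute $(1-r)a + r a^2$. So it suffices to show
$$\sum_{n\ge1}\Bigl((1-r)|a_n|r^n + r|a_n|^2r^{2n}\Bigr)\le 1-(1-r)a-ra^2=(1-a)(1+ra).$$

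\emph{Step 2.} We use two facts about $f\in\mathcal B(\mathbb D)$.
\begin{itemize}
\item[(i)] Wiener's inequality $|a_n|\le 1-a^2$, which is Lemma A at $\alpha=0$.
\item[(ii)] The Parseval bound $\sum_{n\ge1}|a_n|^2\le 1-a^2$.
\end{itemize}
From (i), $(1-r)\sum_{n\ge1}|a_n|r^n\le (1-a^2)r$. From (ii), $r\sum_{n\ge1}|a_n|^2r^{2n}\le r^3(1-a^2)$. These crude bounds are not sharp enough, so I would instead use (ii) in the refined form
$$\sum_{n\ge1}|a_n|^2\rho^{2n}\le \rho^2\,\frac{(1-a^2)^2}{1-a^2\rho^2},$$
valid for $|\rho|<1$. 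This follows by applying Parseval to $g=(f-a_0)/(1-\overline{a_0}f)$, which is a Schwarz function. Equivalently, use the standard estimate $\sum_{n\ge1}|a_n|r^n\le r\frac{1-a^2}{1-ar}$ for the linear term, which is the sharp Bombieri-type bound obtained via subordination. A cleaner route is the $H^2$ identity $\|f\|_r^2\le\frac{a^2+r^2-2a^2r^2}{1-a^2r^2}$, obtained by writing $f=\frac{a_0+g}{1+\overline{a_0}g}$ with $|g(z)|\le|z|$.

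\emph{Step 3.} Inserting these bounds, the problem reduces to an elementary inequality in $(a,r)\in[0,1]\times[0,1)$. Dividing by $1-a$, it becomes
$$\frac{(1-r)r(1+a)}{1-ar}+\frac{r^3(1+a)(1-a)}{1-a^2r^2}\le 1+ra .$$
I would verify this by clearing denominators and factoring out nonnegative factors, noting that equality holds at $a=1$, which corresponds to $f\equiv1$ and gives sharpness.

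\emph{Main obstacle.} The hardest step is the choice of tail bounds. If the bounds from (i) and (ii) are too lossy, the polynomial inequality fails for $a$ near $0$ and $r$ near $1$. In that case I would fall back on the sharp joint statement obtained from Carlson/Schur-type reasoning. Specifically, for fixed $a$, the functional $(1-r)\sum|a_n|r^n+r\sum|a_n|^2r^{2n}$ should be maximized by a Möbius map $\varphi_a$ or by $a+(1-a^2)z$. I would check those extremals directly.
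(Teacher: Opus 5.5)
Your reduction in Step~1 is correct, but Step~2 rests on a false inequality. The ``standard estimate'' $\sum_{n\ge1}|a_n|r^n\le r\frac{1-a^2}{1-ar}$ does not hold on all of $[0,1)$. The subordination $f\prec\frac{a_0+z}{1+\overline{a_0}z}$ controls integral means (Littlewood), not majorant series, so this estimate is not ``equivalent'' to your $H^2$ bound. For a counterexample, take $a_0=0$ and $f(z)=z\,\frac{b-z}{1-bz}$ with $b\in(0,1)$. Then $\sum_{n\ge1}|a_n|r^n=br+\frac{(1-b^2)r^2}{1-br}\to1+2b$ as $r\to1^-$, while your bound tends to $1$; already $b=\tfrac12$, $r=0.6$ gives $0.686>0.6$. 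Your $H^2$ estimate $\sum_{n\ge1}|a_n|^2\rho^{2n}\le\rho^2\frac{(1-a^2)^2}{1-a^2\rho^2}$ is true, but it comes from Littlewood's subordination theorem, not from Parseval applied to $g$.

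More fundamentally, no pairing of separate tail bounds of this kind can work. With the valid ones (Wiener for the linear sum, Littlewood for the quadratic sum), your target at $a=0$ becomes $r+r^3\le1$, which fails once $r\ge0.7$. The two sums cannot be extremal at the same time, so you need an inequality that trades one against the other. Your fallback points in that direction (``Carlson/Schur-type reasoning''), but checking a few candidate extremals proves nothing for all $f\in\mathcal{B}$.

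The missing ingredient is the coupled estimate the paper imports from Ponnusamy--Vijayakumar--Wirths. It is the inequality \eqref{eq-3.1} used in the proof of Theorem~\ref{thm-3.1}:
\[
\sum_{n\ge1}|a_n|r^n+\Bigl(\frac{1}{1+a}+\frac{r}{1-r}\Bigr)\sum_{n\ge1}|a_n|^2r^{2n}\le(1-a^2)\frac{r}{1-r}.
\]
It comes from Carlson's bounds
\[
|a_{2k+1}|\le1-\sum_{j=0}^{k}|a_j|^2,\qquad |a_{2k}|\le1-\sum_{j=0}^{k-1}|a_j|^2-\frac{|a_k|^2}{1+a}.
\]
Multiply these by $r^{2k+1}$ and $r^{2k}$ respectively and sum. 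Each $|a_j|^2$ with $j\ge1$ then collects the weight
\[
\frac{r^{2j}}{1+a}+\sum_{m\ge2j+1}r^m=r^{2j}\Bigl(\frac{1}{1+a}+\frac{r}{1-r}\Bigr).
\]
Inserting this into your Step~1 quantity gives
\[
(1-r)\sum_{n\ge0}|a_n|r^n+r\|f\|_r^2\le(1-r)a+r-\frac{1-r}{1+a}\|f_0\|_r^2\le1-(1-r)(1-a)\le1,
\]
with equality only when $a=1$, that is, for unimodular constants.
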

\begin{thmC}\cite[Theorem 2]{Ponnusamy-Vijayakumar-Wirths-RM-2020}
	Suppose that $f \in \mathcal{B}$, $f(z) = \sum_{n=0}^{\infty} a_n z^n$, and $f_0(z) = f(z) - a_0$. Then
	\begin{equation}\label{Eq-22.1}
		\sum_{n=0}^{\infty} |a_n| r^n + \left( \frac{1}{1 + |a_0|} + \frac{r}{1 - r} \right) \|f_0\|^2_{r} \le 1 \quad \text{for } r \le \frac{1}{2 + |a_0|}
	\end{equation}
	and the numbers $\frac{1}{2+|a_0|}$ and $\frac{1}{1+|a_0|}$ cannot be improved. Moreover,
	\begin{equation}\label{Eq-22.2}
		|a_0|^2 + \sum_{n=1}^{\infty} |a_n| r^n + \left( \frac{1}{1 + |a_0|} + \frac{r}{1 - r} \right) \|f_0\|^2_{r} \le 1 \quad \text{for } r \le \frac{1}{2}
	\end{equation}
	and the numbers $\frac{1}{2}$ and $\frac{1}{1+|a_0|}$ cannot be improved.
\end{thmC}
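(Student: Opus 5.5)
The plan is to reduce both inequalities in Theorem C to one coefficient lemma for $f\in\mathcal{B}$ with $a:=|a_0|<1$. The lemma to prove is
\begin{equation*}
\sum_{n=1}^{\infty}|a_n| r^n+\left(\frac{1}{1+a}+\frac{r}{1-r}\right)\sum_{n=1}^{\infty}|a_n|^2 r^{2n}\le (1-a^2)\frac{r}{1-r},\qquad r\in[0,1).
\end{equation*}
Once this is available, \eqref{Eq-22.1} follows by adding $a$ to both sides. It then suffices to check that $a+(1-a^2)r/(1-r)\le 1$, which is equivalent to $(1+a)r\le 1-r$, that is, $r\le 1/(2+a)$. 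For \eqref{Eq-22.2} we add $a^2$ instead. The condition $a^2+(1-a^2)r/(1-r)\le 1$ is equivalent to $r\le 1/2$. The case $a=1$ is trivial, since then $f$ is constant.

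To prove the lemma, I would strengthen the crude bound $|a_n|\le 1-a^2$ (Lemma A at $\alpha=0$) into a Schur-type refinement that carries a quadratic correction. The model case is $G\in\mathcal{B}$ with $G(\zeta)=c_0+c_1\zeta+c_2\zeta^2+\cdots$, which satisfies the classical bound
\[
|c_2|\le 1-|c_0|^2-\frac{|c_1|^2}{1+|c_0|}.
\]
This comes from one step of the Schur algorithm: apply the map $w\mapsto (w-c_0)/(1-\overline{c_0}w)$, divide by $\zeta$, and use $|b_1|\le 1-|b_0|^2$ for the resulting function. More generally, I would prove that for $n\ge1$ and $1\le k<n$ the coefficient $|a_n|$ is bounded by $1-a^2$ minus a nonnegative quadratic expression in the earlier coefficients $a_1,\dots,a_{n-1}$. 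The tool is the averaging trick: $\frac1m\sum_{j} f(\omega^j z)$ with $\omega=e^{2\pi i/m}$ lies in $\mathcal{B}$ and isolates the subsequence $a_0,a_m,a_{2m},\dots$. Combining this with $\sum_{n\ge0}|a_n|^2\le1$ should give a family of inequalities of the form
\[
|a_n|r^n+\frac{|a_n|^2r^{2n}}{1+a}+(\text{cross terms})\le (1-a^2)r^n .
\]
Summing over $n\ge1$ should then produce exactly $\sum_n|a_n|r^n+\frac1{1+a}\sum_n |a_n|^2r^{2n}+\frac{r}{1-r}\sum_n|a_n|^2r^{2n}$ on the left and $(1-a^2)\sum_{n\ge1}r^n$ on the right. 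Here the factor $\frac{r}{1-r}=\sum_{k\ge1}r^k$ is meant to come from the shifted copies $|a_n|^2r^{2n}\cdot r^k$.

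The main obstacle is this bookkeeping: choosing the refined coefficient inequalities so that, after weighting by $r^n$ and summing, the quadratic terms collect into precisely $\bigl(\frac1{1+a}+\frac r{1-r}\bigr)\|f_0\|_r^2$. My first attempt would be to argue by induction on the truncation $\sum_{n=1}^N$, using the Schur transform $f\mapsto f_1$ with $f=(a_0+zf_1)/(1+\overline{a_0}zf_1)$. I would express the coefficients of $f$ through those of $f_1$ and apply the inductive hypothesis to $f_1$.

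For sharpness I would use $\varphi_a(z)=(a-z)/(1-az)$, whose coefficients are $a_n=-(1-a^2)a^{n-1}$. Substituting these into the left side of \eqref{Eq-22.1} and letting $r$ pass $1/(2+a)$ should show the sum exceeds $1$. Perturbing the coefficient $\frac1{1+a}$ upward, with $r$ fixed at the endpoint, should likewise violate the inequality; this can be checked by expanding to first order in the perturbation. The same family, with $a\to1^-$ in the first case and the analogous computation in \eqref{Eq-22.2}, shows that the constants $\frac1{2+|a_0|}$, $\frac12$ and $\frac1{1+|a_0|}$ cannot be improved.
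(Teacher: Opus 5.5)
Your reduction is the right one. The paper does not reprove Theorem C; it imports exactly your lemma from the source of Theorem C as inequality \eqref{eq-3.1} in the proof of Theorem~\ref{thm-3.1}. Both \eqref{Eq-22.1} and \eqref{Eq-22.2} then follow by the arithmetic you give. Your sharpness argument is also fine. For $\varphi_a$ the lemma holds with equality for every $r$ (this is the paper's computation of $F_1(a,\rho)$ with $k=0$), so no limit $a\to1^-$ is needed. The gap is the lemma itself, which is the whole content of Theorem C, and your plan does not prove it. The coefficient inequalities you need are Carlson's: for every $t\ge0$,
\begin{align*}
|a_{2t+1}|+\sum_{k=1}^{t}|a_k|^2\le 1-|a_0|^2,\qquad |a_{2t+2}|+\sum_{k=1}^{t}|a_k|^2+\frac{|a_{t+1}|^2}{1+|a_0|}\le 1-|a_0|^2 .
\end{align*}
Multiply the inequality for index $m$ by $r^m$ and sum over $m\ge1$; this gives your lemma exactly. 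The reason is that $|a_k|^2$ occurs with weight $1$ in every inequality with $m\ge 2k+1$, contributing $|a_k|^2\sum_{m\ge 2k+1}r^m=\frac{r}{1-r}|a_k|^2r^{2k}$, and with weight $\frac{1}{1+|a_0|}$ in the one with $m=2k$. So the term $\frac{|a_n|^2r^{2n}}{1+a}$ belongs to the inequality for $a_{2n}$, not to the one for $a_n$ as in your schematic display. Since each of these inequalities is an equality for $\varphi_a$, nothing weaker will do.

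None of the tools you list produces these inequalities. Averaging over $m$-th roots of unity replaces $f$ by $\sum_j a_{jm}z^{jm}$. It therefore discards precisely $a_1,\dots,a_t$, the coefficients that must appear in the correction for $a_{2t+1}$. The bounds $|a_n|\le 1-|a_0|^2$, $\sum_n|a_n|^2\le 1$ and your estimate for $a_2$ are all satisfied by $a_0=a_2=0$, $|a_1|=\sqrt3/2$, $|a_3|=1/2$, yet $|a_3|+|a_1|^2=5/4>1$.

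The Schur-transform induction is only announced, and it is not routine. Writing $f=(a_0+z\omega)/(1+\overline{a_0}z\omega)$ gives $a_3=(1-|a_0|^2)(\omega_2-2\overline{a_0}\omega_0\omega_1+\overline{a_0}^2\omega_0^3)$. Estimating this term by term with the known bounds on $\omega_0,\omega_1,\omega_2$ does not yield $|a_3|+|a_1|^2\le 1-|a_0|^2$, because the phases matter.

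A short correct route is the contractivity of the triangular Toeplitz matrix $T=(a_{i-j})_{0\le j\le i\le m}$, which is the compression of multiplication by $f$ on $H^2$. For the odd case, first rotate $f$ so that $a_{2t+1}\ge 0$. Take $x=e_0+\sum_{j=0}^{t}\overline{a_{t-j}}\,e_{t+1+j}$ and $y=e_{2t+1}+\sum_{i=0}^{t}a_ie_i$, and put $S=\sum_{k=0}^{t}|a_k|^2$. Then $|\langle Tx,y\rangle|\le\|x\|\|y\|$ reads $a_{2t+1}+2S\le 1+S$, which is the odd inequality. For $m=2t+2$, shift the block of $x$ to $e_{t+2},\dots,e_{2t+2}$, add $\lambda e_{t+1}$ to $x$ and $\bar\lambda e_{t+1}$ to $y$, and optimize over $\lambda$; this gives the even inequality.
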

The authors have remarked that $\frac{1}{3} \le \frac{1}{2+|a_0|} \le \frac{1}{2}$. In particular, in the case of $a_0 = 0$ the conclusion of Theorem C, namely, the inequality \eqref{Eq-22.1} gives that
\begin{align*}
	\sum_{n=1}^{\infty} |a_n| r^n + \frac{1}{1 - r} \|f\|^2_{r} \le 1 \quad \text{for }\text{for } r \le \frac{1}{2}.
\end{align*}
\begin{thmD}\cite[Theorem 3]{Ponnusamy-Vijayakumar-Wirths-RM-2020}
	Suppose that $f \in \mathcal{B}$ and $f(z) = \sum_{n=1}^{\infty} a_n z^n$. Then we have the following:\vspace*{1.2mm}
	
	\item [(a)] $\displaystyle\sum_{n=1}^{\infty} |a_n| r^n + \left( \frac{1}{1 + |a_1|} + \frac{r}{1 - r} \right) \sum_{n=2}^{\infty} |a_n|^2 r^{2n-1} \le 1 \quad \text{for } r \le \frac{3}{5}.$\vspace{1.2mm}
	
	\noindent The number $3/5$ is sharp. \vspace*{1.2mm}
	
	\item [(b)]  $\displaystyle\sum_{n=1}^{\infty} |a_n| r^n + \left( \frac{r^{-1}}{1 + |a_1|} + \frac{1}{1 - r} \right) \|f\|^2_{r} \le 1 \quad \text{for } r \le \frac{5 - \sqrt{17}}{2}.$\vspace{1.2mm}
	
	\noindent The number $\frac{5-\sqrt{17}}{2}$ is sharp.\vspace*{1.2mm}
	
	\item [(c)]  $\displaystyle\sum_{n=1}^{\infty} |a_n| r^n + \left( \frac{r^{-1}}{1 + |a_1|} + \frac{1}{1 - r} \right) \|f\|^2_{r} \le 1 \quad \text{for } r \le r(a),$	where 
	$$r(a) = \frac{2(1 + a)}{1 + 2a + 2a^2 + \sqrt{4a^4 + 8a + 5}}.$$
	The radius $r(a)$ is sharp for any $a \in [0, 1)$.
\end{thmD}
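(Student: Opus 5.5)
Every $f\in\mathcal{B}$ with $f(0)=0$ factors as $f(z)=z\,g(z)$ with $g\in\mathcal{B}$ (Schwarz lemma). So I would reduce all three parts of Theorem D to an estimate for $g$. Write $g(z)=\sum_{n=0}^\infty b_nz^n$, so that $b_n=a_{n+1}$ and $a:=|b_0|=|a_1|$, and put $g_0=g-b_0$. Then
\[
\sum_{n=1}^\infty |a_n|r^n=r\sum_{n=0}^\infty|b_n|r^n,\qquad \sum_{n=2}^\infty|a_n|^2r^{2n-1}=r\|g_0\|_r^2,\qquad \|f\|_r^2=r^2\bigl(a^2+\|g_0\|_r^2\bigr).
\]
The key tool is the sharp estimate underlying Theorem C: for $g\in\mathcal{B}$ with $|g(0)|=a$,
\[
\sum_{n=1}^\infty|b_n|r^n+\Bigl(\frac{1}{1+a}+\frac{r}{1-r}\Bigr)\|g_0\|_r^2\le \frac{(1-a^2)r}{1-r}.
\]
This estimate yields \eqref{Eq-22.1} at once, since $a+(1-a^2)r/(1-r)\le 1$ is equivalent to $r\le 1/(2+a)$. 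I would take it from the proof of Theorem C. If it must be reproved, the natural route is to combine the Wiener-type bound $|b_n|\le 1-a^2$ (Lemma A at $\alpha=0$) with the Parseval/Carlson inequality $\sum_{n\ge1}|b_n|^2\le(1-a^2)^2$, distributing the weight $\frac{1}{1+a}+\frac{r}{1-r}$ across the coefficients. I expect proving this estimate to be the main obstacle.

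For part (a), the left-hand side equals
\[
r\Bigl[a+\sum_{n\ge1}|b_n|r^n+\Bigl(\frac{1}{1+a}+\frac{r}{1-r}\Bigr)\|g_0\|_r^2\Bigr]\le r\Bigl[a+\frac{(1-a^2)r}{1-r}\Bigr]=:\Phi(a).
\]
Maximize $\Phi$ over $a\in[0,1]$. For $r\ge 1/3$ the critical point is $a=(1-r)/(2r)\le1$, where
\[
\Phi=\frac{1-r}{4}+\frac{r^2}{1-r}.
\]
This is at most $1$ if and only if $5r^2+2r-3\le 0$, that is, $r\le 3/5$. For $r<1/3$ the maximum is $\Phi(1)=r<1$.

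For parts (b) and (c), the coefficient of $\|f\|_r^2$ gives
\[
\Bigl(\frac{r^{-1}}{1+a}+\frac1{1-r}\Bigr)\|f\|_r^2=r\Bigl(\frac1{1+a}+\frac r{1-r}\Bigr)\bigl(a^2+\|g_0\|_r^2\bigr).
\]
Applying the key estimate, the whole left-hand side is at most
\[
r\Bigl[a+\frac{a^2}{1+a}+\frac{r}{1-r}\Bigr].
\]
Set $A=a(1+2a)/(1+a)$. The condition that this bound is at most $1$ becomes $(1-A)r^2+(1+A)r-1\le 0$. Its positive root is
\[
r=\frac{2}{(1+A)+\sqrt{(1+A)^2+4(1-A)}}.
\]
Multiplying numerator and denominator by $1+a$, and using $(1+a)(1+A)=1+2a+2a^2$ together with $(1+a)^2\bigl[(1+A)^2+4(1-A)\bigr]=4a^4+8a+5$, yields exactly $r(a)$. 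This proves (c). The root decreases as $A$ increases, and $A$ increases in $a$, so the infimum over $a\in[0,1)$ is attained in the limit $a\to1^-$, where $A=3/2$. The condition becomes $r^2-5r+2\ge0$, giving $r\le (5-\sqrt{17})/2$, which proves (b).

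For sharpness I would take $f(z)=z\varphi_a(z)$, with $\varphi_a$ as in the Remark. For this function the key estimate is an equality (it is the extremal case of Theorem C), so every inequality above becomes an equality. The critical choice of $a$ (namely $a=(1-r)/(2r)$ in (a), and $a\to1^-$ in (b)) then shows that the left-hand side exceeds $1$ just beyond the stated radii.
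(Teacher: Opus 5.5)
Your argument is correct and follows the paper's route, namely the proofs of Theorems~\ref{thm-3.1}, \ref{thm-3.2} and \ref{thm-3.3} specialized to $k=1$, $\gamma=0$: factor $f=zg$, apply the refined estimate \eqref{eq-3.1} to $g$, maximize in $a=|b_0|$, and use $z\varphi_a$ (respectively $f(z)=z$, or $a\to1^-$) as extremals. The only flaw is in your optional reproof sketch: $\sum_{n\ge1}|b_n|^2\le(1-a^2)^2$ is false (for $g=\varphi_a$ the sum equals $1-a^2$), and Wiener's bound plus Parseval cannot yield the key estimate (at $a=0$ the sequence $|b_n|=\sqrt{1-r^2}\,r^{n-1}$ satisfies both yet violates it for small $r$); the estimate follows instead by summing Carlson's bounds $|b_{2m+1}|\le 1-\sum_{j=0}^{m}|b_j|^2$ and $|b_{2m}|\le 1-\sum_{j=0}^{m-1}|b_j|^2-\frac{|b_m|^2}{1+|b_0|}$ against $r^n$.
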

\begin{thmE}\cite[Theorem 1]{Ponnusamy-Vijayakumar-Wirths-RM-2020}
	Suppose that $f \in \mathcal{B}$ and $f(z) = \sum_{n=0}^{\infty} a_n z^n$. Then the inequality
	\[
	\sum_{n=0}^{\infty} |a_n| r^n \le \frac{1}{1-r} \left( 1 - r \|f\|_{r}^2 \right)
	\]
	is valid for $r \in [0, 1)$. Equality is attained for $f(z) = 1\;{\mbox{for}}\; z \in \mathbb{D}$.
\end{thmE}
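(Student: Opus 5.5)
The plan is to reduce the statement to a one-variable extremal problem in $a:=|a_0|$. The only inputs will be the Wiener-type bound $|a_n|\le 1-|a_0|^2$ for $n\ge1$ (Lemma~A with $\alpha=0$) and Parseval's inequality $\sum_{n\ge0}|a_n|^2\le 1$. Writing $x_n=|a_n|$ and using $\frac{1}{1-r}=\sum_{n\ge0}r^n$, the claim is equivalent to
\[
\sum_{n=0}^{\infty} r^n(1-x_n)\;\ge\;\frac{r}{1-r}\sum_{n=0}^{\infty}x_n^2r^{2n}.
\]
Isolating the $n=0$ terms, it suffices to show
\[
(1-a)-\frac{r a^2}{1-r}+\sum_{n\ge1}\Bigl(r^n(1-x_n)-\frac{r^{2n+1}}{1-r}x_n^2\Bigr)\ \ge 0 .
\]
If $a=1$ then $f$ is constant, all $x_n=0$ for $n\ge1$, and the left-hand side is $0$ because $r^0(1-1)=0$ and $\frac{r}{1-r}=\sum_{n\ge1}r^n$. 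This is exactly the stated equality case $f\equiv1$.

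For $a<1$, I will treat the left-hand side as a functional $\Phi(x_1,x_2,\dots)$ on the admissible set
\[
0\le x_n\le 1-a^2,\qquad \sum_{n\ge1}x_n^2\le 1-a^2,
\]
and estimate its minimum. Each summand $r^n(1-x_n)-\frac{r^{2n+1}}{1-r}x_n^2$ is concave and decreasing in $x_n$. Hence $\Phi$ is concave, so its minimum over this convex, weakly compact set is attained at an extreme configuration. There the mass $\sum x_n^2$ sits on as few coordinates as possible, each coordinate at the Wiener cap $1-a^2$ or at the Parseval constraint. The heuristic is that the cheapest place for the mass is the low-index coordinates, since $r^n$ is largest there. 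The terms with $x_n=0$ contribute $r^n$ each, and these positive contributions should absorb the deficit $\frac{ra^2}{1-r}-(1-a)$ coming from $n=0$.

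Concretely, I would bound $\sum_{n\ge1}r^n x_n$ by Cauchy--Schwarz against $\sum_{n\ge1} x_n^2\le 1-a^2$. I would bound $\sum_{n\ge1}r^{2n+1}x_n^2\le r^3(1-a^2)$. Together these reduce the claim to an explicit two-variable inequality $F(a,r)\ge0$ on $[0,1)\times[0,1)$. If Cauchy--Schwarz loses too much, I would instead use the sharper coefficient estimate $\sum_{n\ge1}|a_n|r^n\le \frac{r(1-a^2)}{1-ar}$, which is obtained by applying the Schwarz lemma to $(f-a_0)/(1-\overline{a_0}f)$ and expanding. For the quadratic term I would use the Carlson-type bound $\sum_{n\ge1}|a_n|^2r^{2n}\le \frac{r^2(1-a^2)^2}{1-a^2r^2}$, which comes from subordination of $f$ to the Möbius map $\varphi_a$ of Remark~1. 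With these, $F$ becomes a rational function in $a$ and $r$, and its nonnegativity should follow by clearing denominators and factoring out $(1-a)$.

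The main obstacle is this last step: verifying $F(a,r)\ge0$ uniformly, including as $r\to1^-$ where both sides blow up like $\frac{1}{1-r}$. The leading singular terms must cancel exactly, leaving a bounded remainder with the correct sign. I would first check the regimes $a\to1$ (equality) and $a=0$ separately. Then I would factor $(1-a)$ from the numerator and show that the remaining polynomial in $(a,r)$ has nonnegative coefficients after the substitution $r=\frac{t}{1+t}$.
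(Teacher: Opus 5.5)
Your reduction has a genuine gap. Wiener's bound and Parseval, the two inputs you allow, do not imply the inequality, and the majorant estimate you propose as a fallback is false.

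\textbf{The extremal problem has a negative minimum.} Take $a=0$, $x_n=\frac{1}{4n}$ for $2\le n\le L$, $x_n=0$ for $n>L$, and $x_1=\bigl(1-\sum_{n=2}^{L}\frac{1}{16n^2}\bigr)^{1/2}\in(0.979,1]$. This sequence satisfies $0\le x_n\le 1-a^2$ and $\sum_{n\ge1}x_n^2=1-a^2$. It also satisfies your $H^2$ bound, which at $a=0$ reads $\sum_{n\ge1}x_n^2r^{2n}\le r^2$. Multiply the target by $1-r$ and use $\sum_{n\ge1}x_n^2=1$; it becomes
\[
(1-r)\sum_{n\ge1}x_nr^n\le\sum_{n\ge1}x_n^2\bigl(1-r^{2n+1}\bigr).
\]
Since $1-r^{2n+1}\le(2n+1)(1-r)$, the right side is at most $(1-r)\bigl(3.05+\frac{1}{8}\ln L\bigr)$. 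For $r=1-\frac{1}{10L}$ we have $r^n\ge0.9$ for $n\le L$, so the left side is at least $0.9(1-r)\bigl(0.72+\frac14\ln L\bigr)$. For $L\ge e^{25}$ the inequality fails, i.e.\ $\Phi<0$ at an admissible point. Hence no analysis of $\Phi$ on your set can give $\Phi\ge0$.

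Your heuristic that the worst case sits on a few low-index coordinates is backwards. The bad configuration is a long harmonic tail: the linear term gains about $\frac14\ln L$ while the quadratic penalty costs only $\frac18\ln L$. Even Wiener's bound combined with the (correct) Littlewood bound breaks down at $a=0$, where it would require $r+r^3\le1$.

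\textbf{The fallback majorant bound is false.} The estimate $\sum_{n\ge1}|a_n|r^n\le\frac{r(1-a^2)}{1-ar}$ does not hold. Subordination to $\frac{a_0+w}{1+\overline{a_0}w}$ controls integral means, which is why your $H^2$ bound is right, but it does not control the majorant series. At $a_0=0$ the bound would give $\sum_{n\ge1}|a_n|r^n\le r$, i.e.\ $M_g(r)\le1$ for every $g\in\mathcal{B}$ and every $r<1$ (write $f=zg$). This contradicts the sharpness of $1/3$; for instance, $f(z)=z\frac{0.7-z}{1-0.7z}$ at $r=\frac12$ gives a left side of about $0.546$. Both of your fallback bounds are attained by $\varphi_a$, so the resulting $F(a,r)\ge0$ would only verify Theorem E for M\"obius maps.

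\textbf{The missing idea} is a coefficient estimate that couples each $|a_n|$ with the squares of the lower coefficients. This is Carlson's inequality
\[
|a_n|\le1-\sum_{k=0}^{\lfloor (n-1)/2\rfloor}|a_k|^2\qquad(n\ge1),
\]
which for even $n$ holds with the extra term $-\frac{|a_{n/2}|^2}{1+|a_0|}$ on the right. Multiply by $r^n$, sum over $n\ge1$, and interchange the sums using $\sum_{n\ge2k+1}r^n=\frac{r^{2k+1}}{1-r}$. This gives $\sum_{n\ge1}|a_n|r^n\le\frac{r}{1-r}\bigl(1-\|f\|_r^2\bigr)$. Hence
\[
\sum_{n\ge0}|a_n|r^n\le|a_0|+\frac{r}{1-r}\bigl(1-\|f\|_r^2\bigr)\le\frac{1}{1-r}\bigl(1-r\|f\|_r^2\bigr),
\]
with equality for $f\equiv1$.

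Keeping the even-index refinement produces exactly the inequality \eqref{eq-3.1}, which the paper imports from the cited proof. Theorem E follows from it, because the difference of the two right-hand sides is $1-|a_0|+\frac{\|f_0\|_r^2}{1+|a_0|}\ge0$.
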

\begin{rem}
	It is remarked in \cite{Ponnusamy-Vijayakumar-Wirths-RM-2020} that
	$$\frac{3}{5} > \frac{1}{2} = r\left(\frac{1}{\sqrt{2}}\right) > \frac{5 - \sqrt{17}}{2}.$$
\end{rem}
{In summary, the principal objective of this paper is to provide a sharp geometric description of the shifted disk family $\Omega_\gamma$ together with its limiting half-plane $\mathbb{H}_1 = \{ z \in \mathbb{C} : \text{Re}(z) < 1 \}$. We establish that as $\gamma \to 1^-$, the nested sequence $\Omega_\gamma$ exhausts $\mathbb{H}_1$ monotonically, converging locally in the Hausdorff metric. The formal proofs and their analytic ramifications form the subject of the remaining sections.}
\section{\bf Improved Bohr inequality for functions in shifted disk $\Omega_\gamma$}\label{sec-3}
In this section, we establish a generalized lemma providing sharp coefficient estimates for analytic functions of the form 
\begin{align}\label{Eq-33.1}
	f(z) = \sum_{n=k}^{\infty} a_n \left(z+\frac{\gamma}{1-\gamma}\right)^n
\end{align} bounded by $1$ on the shifted disk $\Omega_{\gamma}$. For the particular case $k=0$, our result reduces to \cite[Lemma 4]{Allu-Biswas-Mandal-arXiv-2026}. However, while the authors in \cite{Allu-Biswas-Mandal-arXiv-2026} omitted the sharp characterization of their coefficient bounds, we explicitly demonstrate the sharpness of the inequality in a more general setting for arbitrary indices $k \in \mathbb{N}_0$. Our result thus provides a significant refinement and a complete extension of the existing literature.
\begin{lem}\label{Lem-3.1}
	Let $k \in \mathbb{N}_0$ and let $f$ be an analytic function on the shifted disk $\Omega_\gamma$ bounded by $1$ in modulus, with the power series expansion \eqref{Eq-33.1}. Then, 
	\begin{align}\label{Eq-33.2}|a_n|\leq(1-\gamma)^{n}(1-|a_k|^2) \;\text{for}\; n\geq k+1.
	\end{align}Moreover, the inequality \eqref{Eq-33.2} is sharp for each $n \geq k+1$.
	\end{lem}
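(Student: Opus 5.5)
The plan is to transplant $f$ to the unit disk by the affine map taking $\mathbb{D}$ onto $\Omega_\gamma$, divide out the zero of order $k$ at the shifted origin, and then apply the classical Wiener estimate (Lemma A with $\alpha=0$, which gives $|b_m|\le 1-|b_0|^2$ for $m\ge 1$ and any $g\in\mathcal{B}(\mathbb{D})$).

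\emph{Step 1 (rescaling).} Put $w=z+\frac{\gamma}{1-\gamma}$. As $z$ ranges over $\Omega_\gamma$, $w$ ranges over the disk $|w|<\frac{1}{1-\gamma}$. Set $\zeta=(1-\gamma)w\in\mathbb{D}$ and
$$F(\zeta)=f\Big(\frac{\zeta-\gamma}{1-\gamma}\Big)=\sum_{n=k}^{\infty}a_n(1-\gamma)^{-n}\zeta^n .$$
Then $F\in\mathcal{B}(\mathbb{D})$ and $F$ has a zero of order at least $k$ at $\zeta=0$. For $k=0$ this step together with Wiener's estimate already reproduces Lemma B.

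\emph{Step 2 (division by $\zeta^k$).} Define $g(\zeta)=F(\zeta)/\zeta^k$. This function is analytic in $\mathbb{D}$. By the maximum principle on $|\zeta|=\rho$ with $\rho\to1^-$ (the Schwarz lemma argument), $|g|\le 1$ on $\mathbb{D}$. Its Taylor coefficients are
$$b_m=a_{m+k}(1-\gamma)^{-(m+k)}, \qquad m\ge 0.$$
Applying $|b_m|\le 1-|b_0|^2$ for $m\ge1$ and writing $n=m+k\ge k+1$ gives
$$|a_n|\le (1-\gamma)^n\Big(1-\frac{|a_k|^2}{(1-\gamma)^{2k}}\Big).$$
Since $(1-\gamma)^{-2k}\ge 1$, the right-hand side is at most $(1-\gamma)^n(1-|a_k|^2)$, which proves \eqref{Eq-33.2}. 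In fact this intermediate bound is stronger than \eqref{Eq-33.2} when $k\ge1$ and $\gamma>0$.

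\emph{Step 3 (sharpness).} For $n\ge k+1$ and $a\in[0,1)$, consider
$$f(z)=\big((1-\gamma)w\big)^k\,\frac{a-\big((1-\gamma)w\big)^{n-k}}{1-a\big((1-\gamma)w\big)^{n-k}}.$$
This lies in $\mathcal{B}(\Omega_\gamma)$, and its coefficients are $a_k=a(1-\gamma)^k$ and $a_n=-(1-a^2)(1-\gamma)^n$.
\begin{itemize}
\item When $k=0$ or $\gamma=0$, we have $|a_k|=a$, so equality holds in \eqref{Eq-33.2} for every $a$.
\item In general, letting $a\to0$ gives $f=\big((1-\gamma)w\big)^n$. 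Here $a_k=0$ and $|a_n|=(1-\gamma)^n$, which again attains equality in \eqref{Eq-33.2}. So neither the factor $(1-\gamma)^n$ nor the constant can be improved.
\end{itemize}

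The main obstacle is the sharpness claim when $k\ge1$ and $\gamma>0$. Step 2 shows the true extremal bound involves $|a_k|^2(1-\gamma)^{-2k}$, so for those parameters equality in \eqref{Eq-33.2} holds only in the boundary case $a_k=0$ covered by the example above. I would state sharpness in that sense, namely that the bound is attained for an admissible function for each $n\ge k+1$. The boundedness of $g$ in Step 2 is routine.
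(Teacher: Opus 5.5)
Your proof of the inequality follows the paper's own argument: transplant by $\Phi(\xi)=(\xi-\gamma)/(1-\gamma)$, factor out $\xi^k$, apply the Schwarz lemma and Wiener's estimate $|b_m|\le 1-|b_0|^2$, then discard the factor $(1-\gamma)^{-2k}\ge 1$. Your sharpness test also uses the paper's family $G(\xi)=\xi^k(a-\xi^{n-k})/(1-a\xi^{n-k})$ composed with $\Phi^{-1}$.

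Your caveat on sharpness is correct, and here you are more accurate than the paper. The paper asserts $|a_n|=(1-\gamma)^n(1-|a_k|^2)$ for this family for every $a\in[0,1)$. But $|a_k|=a(1-\gamma)^k$, so this equality holds only when $a=0$, $k=0$ or $\gamma=0$. Your Step 2 shows that for $k\ge1$ and $\gamma>0$, equality in \eqref{Eq-33.2} forces $a_k=0$, so only your weaker reading of ``sharp'' can actually be proved. The bound that is sharp for every value of $|a_k|$ is your intermediate one, $|a_n|\le(1-\gamma)^n\bigl(1-|a_k|^2(1-\gamma)^{-2k}\bigr)$, with equality for that same family.

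One small slip: at $a=0$ the example is $-\bigl((1-\gamma)w\bigr)^n$, not $\bigl((1-\gamma)w\bigr)^n$. This does not affect the argument.
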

	\begin{proof}[\bf Proof of Lemma \ref{Lem-3.1}]
		{Let} $\Phi:\mathbb{D}\rightarrow\Omega_\gamma$ {be the conformal map} defined by
		\begin{align*}\Phi(\xi)=\frac{\xi-\gamma}{1-\gamma}, \quad \xi \in \mathbb{D}.
		\end{align*}It is clear that $\Phi$ maps {the unit disk} $\mathbb{D}$ univalently onto $\Omega_\gamma$. Consequently, the composition $F = f\circ \Phi$ is an analytic map from $\mathbb{D}$ into $\mathbb{D}$ satisfying $|F(\xi)| \le 1$ for all $\xi \in \mathbb{D}$. For any $z \in \Omega_\gamma$, the variable $\xi = \gamma + (1-\gamma)z$ belongs to $\mathbb{D}$. Substituting $\Phi(\xi) = z$, we obtain the {following} Taylor expansion of $F$ about the origin
		\begin{align*}
			F(\xi) = f\left(\frac{\xi-\gamma}{1-\gamma}\right) = \sum_{n=k}^{\infty}\frac{a_n}{(1-\gamma)^n}\xi^n = \xi^k \sum_{m=0}^{\infty}\frac{a_{m+k}}{(1-\gamma)^{m+k}}\xi^{m}.\end{align*}We define the auxiliary function $g: \mathbb{D} \to \mathbb{C}$ by $g(\xi) = \sum_{m=0}^{\infty} b_m \xi^m$, where
			\begin{align*}b_m = \frac{a_{m+k}}{(1-\gamma)^{m+k}},\; m \geq 0.
			\end{align*}Since $F(\xi) = \xi^k g(\xi)$ and $|F(\xi)| \le 1$, {in view of} Schwarz's lemma, it is easy to see that $g$ is analytic in $\mathbb{D}$ and satisfies $|g(\xi)| \le 1$ for all $\xi \in \mathbb{D}$. Applying the classical coefficient inequality for bounded analytic functions (Wiener's estimate or the {consequence of }Schwarz-Pick lemma), the coefficients of $g$ satisfy\begin{align*}|b_m| \leq 1 - |b_0|^2 \quad \text{for } m \geq 1.\end{align*}
		Substituting $b_0 = {a_k}/{(1-\gamma)^k}$ and $b_m = {a_{m+k}}/{(1-\gamma)^{m+k}}$, we immediately deduce that\begin{align*}\frac{|a_{m+k}|}{(1-\gamma)^{m+k}} \leq 1 - \frac{|a_k|^2}{(1-\gamma)^{2k}} \le 1 - |a_k|^2,
	\end{align*}where we note that $(1-\gamma)^{2k} \le 1$ since $\gamma \in [0,1)$. Setting $n = m+k \ge k+1$, a direct simplification {gives}
	\begin{align*}
		|a_{n}|\leq(1-\gamma)^n(1-|a_k|^2),
	\end{align*}
	which completes the validation of {the} inequality \eqref{Eq-33.2}.\vspace{1.2mm}
	
	To establish the sharpness of the inequality \eqref{Eq-33.2}, fix an index $n \ge k+1$ and consider the function $f_0(z) = (G \circ \Phi^{-1})(z)$, where 
	\begin{align*}
		G(\xi) = \xi^k \left( \frac{a - \xi^{n-k}}{1 - a\xi^{n-k}} \right)\; \mbox{for}\;a \in [0,1).
	\end{align*} Since $\Phi^{-1}(z) = \gamma + (1-\gamma)z$, we have
	\begin{align*}f_0(z) &= {\left(\gamma + (1-\gamma)z\right)^k} \left( a - (1-a^2)\sum_{j=1}^{\infty} a^{j-1} {\left(\gamma + (1-\gamma)z\right)}^{j(n-k)} \right) \\&= a(1-\gamma)^k \left(z+\frac{\gamma}{\text{1}-\gamma}\right)^k - (1-a^2)(1-\gamma)^n \left(z+\frac{\gamma}{1-\gamma}\right)^n + \cdots.
	\end{align*}
	Thus, it follows that $a_k = a(1-\gamma)^k$ and $a_n =-(1-a^2)(1-\gamma)^n$. We {obtain} that $|a_n| = (1-|a_k|^2)(1-\gamma)^n$ for $n\geq k+1$, proving that the inequality is sharp.
	\end{proof}
	For a further study of Theorem D, it is natural to raise the following.
	\begin{prob}\label{P-2.1}
		Can the Bohr-type inequalities in Theorem D be generalized to the shifted disk $\Omega_\gamma$ for functions of the form $f(z) = \sum_{n=k}^{\infty} a_n \left(z + \frac{\gamma}{1-\gamma}\right)^n$?
	\end{prob}
	In this paper, one of our {aims} is to give {affirmative} answer to the Problem \ref{P-2.1}. By {the} virtue of Lemma \ref{Lem-3.1}, we establish a sharp refinement and generalization of item (a) of Theorem D by extending the underlying configuration to the general setting of the shifted disk $\Omega_{\gamma}$.
\begin{thm}\label{thm-3.1}
	Suppose that $f \in \mathcal{B}(\Omega_\gamma)$ and $f$ is given by \eqref{Eq-33.1}. Then 
	\begin{align*}
		\sum_{n=k}^{\infty}\frac{|a_n|}{(1-\gamma)^{n}}\rho^n+\left(\frac{(1-\gamma)^k}{(1-\gamma)^k+|a_k|}+\frac{\rho}{1-\rho}\right)\sum_{n=k+1}^{\infty}\frac{|a_n|^2}{(1-\gamma)^{2n}}\rho^{2n-k}\leq1,
	\end{align*}
	for $\rho\leq\rho_k$, where $\rho_k$ is the root in $(0,1)$ of the equation
	\begin{align}\label{Eq-33.3}
		5\rho^{k+1}-2\rho^k+\rho^{k-1}+4\rho-4=0.
	\end{align}
	The radius $ \rho_k $ is best possible.
\end{thm}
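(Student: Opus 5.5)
The plan is to transplant everything to the unit disk via the map $\Phi$ from the proof of Lemma~\ref{Lem-3.1}, reduce to a known majorant estimate, and optimize over $|a_k|$.

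\textbf{Step 1: reduction to the unit disk.} Set $F=f\circ\Phi=\xi^k g(\xi)$ with $g(\xi)=\sum_{m\ge0}b_m\xi^m\in\mathcal{B}(\mathbb{D})$ and $b_m=a_{m+k}/(1-\gamma)^{m+k}$. Write $a:=|b_0|=|a_k|/(1-\gamma)^k\in[0,1]$. Then
$$\frac{(1-\gamma)^k}{(1-\gamma)^k+|a_k|}=\frac{1}{1+a}.$$
Putting $n=m+k$, we have $\rho^n=\rho^k\rho^m$ and $\rho^{2n-k}=\rho^k\rho^{2m}$. So the left-hand side of the theorem equals
\begin{align*}
\rho^k\Big[\,a+\sum_{m\ge1}|b_m|\rho^m+\Big(\frac{1}{1+a}+\frac{\rho}{1-\rho}\Big)\sum_{m\ge1}|b_m|^2\rho^{2m}\Big].
\end{align*}

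\textbf{Step 2: bound the bracket.} I will invoke the estimate underlying Theorem C, established in the course of its proof in \cite{Ponnusamy-Vijayakumar-Wirths-RM-2020}:
$$\sum_{m\ge1}|b_m|\rho^m+\Big(\frac{1}{1+a}+\frac{\rho}{1-\rho}\Big)\sum_{m\ge1}|b_m|^2\rho^{2m}\le (1-a^2)\frac{\rho}{1-\rho}.$$
If it has to be reproved, the route is to use $|b_m|\le 1-a^2$ (Lemma~\ref{Lem-3.1} with $k=0$) together with the $H^2$ bound $\sum_{m\ge1}|b_m|^2\le(1-a^2)^2$. One then treats the sum as a linear-plus-quadratic expression in the $|b_m|$, and maximizes termwise under these constraints. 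This is the step I expect to be the main technical point: the weight $\frac1{1+a}$ is exactly what makes the bound close with equality for Möbius maps.

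With this estimate, the left-hand side is at most $\rho^k\Psi(a)$, where
$$\Psi(a)=a+(1-a^2)\frac{\rho}{1-\rho}.$$

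\textbf{Step 3: maximize over $a\in[0,1]$.}
\begin{itemize}
\item If $\rho\le1/3$, then $\Psi$ is increasing on $[0,1]$, so $\Psi\le\Psi(1)=1$ and the inequality is trivial.
\item If $\rho>1/3$, the maximum is attained at $a^*=(1-\rho)/(2\rho)\in(0,1)$, with value
$$\Psi(a^*)=\frac{\rho}{1-\rho}+\frac{1-\rho}{4\rho}.$$
\end{itemize}
In the second case the condition $\rho^k\Psi(a^*)\le1$, after multiplying by $4\rho(1-\rho)$ and dividing by $\rho$, becomes
$$5\rho^{k+1}-2\rho^k+\rho^{k-1}\le 4-4\rho.$$
This is exactly \eqref{Eq-33.3}. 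The left side minus the right side is increasing in $\rho$ on the relevant range, so the inequality holds precisely for $\rho\le\rho_k$. One should also check that $\rho_k\ge1/3$ (for $k=0$ it gives $\rho_0=3/5$, consistent with Theorem D(a)).

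\textbf{Step 4: sharpness.} Take $g(\xi)=(a-\xi)/(1-a\xi)$, so that $b_m=-(1-a^2)a^{m-1}$ for $m\ge1$, and let $f=(\xi^k g)\circ\Phi^{-1}$, as in the sharpness part of Lemma~\ref{Lem-3.1}. For this $g$:
\begin{itemize}
\item a geometric-series computation gives $\sum_{m\ge1}|b_m|^2\rho^{2m}=(1-a^2)^2\rho^2/(1-a^2\rho^2)$;
\item the left-hand side of the theorem becomes an explicit function of $(a,\rho)$, and the Step~2 estimate holds with equality (up to terms vanishing as $a\to a^*$), or can be checked directly.
\end{itemize}
Choosing $a=a^*(\rho)$ with $\rho>\rho_k$ then makes the left-hand side exceed $1$. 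This shows that $\rho_k$ cannot be improved.
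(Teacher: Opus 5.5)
Your proposal is correct and follows essentially the same route as the paper. You pull back by $\Phi$ to $\xi^k g(\xi)$, invoke the Ponnusamy--Vijayakumar--Wirths estimate for $g$, maximize $\rho^k\bigl(x+\frac{\rho}{1-\rho}(1-x^2)\bigr)$ at $x_0=(1-\rho)/(2\rho)$, and test with $\xi^k(a-\xi)/(1-a\xi)$; for that function the estimate is an exact identity, so no limiting argument is needed. Your case split at $\rho=1/3$ is in fact more careful than the paper, whose vertex bound fails for $k=0$, $\rho<1/3$.

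Two slips to fix:

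\begin{enumerate}
\item It is $k=1$ that gives $\rho_1=3/5$; $k=0$ gives the double root $\rho_0=1/3$.
\item Your fallback reproof of Step 2 would not work. Parseval gives only $\sum_{m\ge1}|b_m|^2\le 1-a^2$, not $(1-a^2)^2$, and using $|b_m|\le 1-a^2$ term by term is too weak. The published proof needs Carlson's coupled bounds $|b_{2n+1}|\le 1-\sum_{j=0}^{n}|b_j|^2$ and $|b_{2n}|\le 1-\sum_{j=0}^{n-1}|b_j|^2-\frac{|b_n|^2}{1+|b_0|}$ for $n\ge1$.
\end{enumerate}
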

\begin{rem}
	In particular, by setting $k=1$ and $\gamma=0$, we obtain the result \cite[Theorem 3]{Ponnusamy-Vijayakumar-Wirths-RM-2020}. 
\end{rem}
\begin{cor}
	Suppose that $f \in \mathcal{B}$ and $f(z) = \sum_{n=1}^{\infty} a_n z^n$. Then 
	\begin{align*}
		\sum_{n=1}^{\infty}|a_n|\rho^n + \left(\frac{1}{1+|a_1|}+\frac{\rho}{1-\rho}\right)\sum_{n=2}^{\infty}|a_n|^2\rho^{2n-1} \leq 1,
	\end{align*}
	for $\rho \leq \rho_1$, where $\rho_1 =3/5$ is the unique positive root in $(0,1)$ of the quadratic equation $5\rho^2 + 2\rho - 3 = 0.$
\end{cor}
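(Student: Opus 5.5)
The corollary is the special case $\gamma=0$, $k=1$ of Theorem \ref{thm-3.1}, so the plan is to specialise that theorem and then check that the radius reduces to $3/5$. No new analytic input is needed: I assume Theorem \ref{thm-3.1} as stated.

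\textbf{Step 1: the domain and the expansion.} With $\gamma=0$ the shifted disk $\Omega_0$ is $\mathbb{D}$. The expansion \eqref{Eq-33.1} with $k=1$ becomes $f(z)=\sum_{n=1}^{\infty}a_n z^n$, and all factors $(1-\gamma)^n$ equal $1$. Hence $\mathcal{B}(\Omega_0)=\mathcal{B}$, and the hypothesis of the corollary is exactly that of Theorem \ref{thm-3.1} for these parameters.

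\textbf{Step 2: the inequality.} Substituting $\gamma=0$, $k=1$ into the conclusion of Theorem \ref{thm-3.1}:
\begin{itemize}
\item the weight becomes $\frac{1}{1+|a_1|}+\frac{\rho}{1-\rho}$;
\item the quadratic sum becomes $\sum_{n=2}^{\infty}|a_n|^2\rho^{2n-1}$.
\end{itemize}
This is precisely the displayed inequality of the corollary, valid for $\rho\le\rho_1$.

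\textbf{Step 3: the radius.} Equation \eqref{Eq-33.3} with $k=1$ reads
\[
5\rho^{2}-2\rho+1+4\rho-4=5\rho^2+2\rho-3=(5\rho-3)(\rho+1)=0.
\]
Its unique root in $(0,1)$ is $\rho_1=3/5$, recovering item (a) of Theorem D.

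\textbf{Step 4: sharpness.} Sharpness of $3/5$ is inherited from the sharpness part of Theorem \ref{thm-3.1}. Concretely, the extremal functions there pull back under $\Phi(\xi)=\xi$ to
\[
f(z)=z\,\frac{a-z}{1-az},\qquad a\to1^-.
\]
One checks that for $\rho>3/5$ the left-hand side exceeds $1$ for $a$ close to $1$.

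The only point needing care is Step 1: the statement of Theorem \ref{thm-3.1} must genuinely apply at the endpoint $\gamma=0$, which it does since $\gamma\in[0,1)$. Beyond that the argument is pure substitution and factoring.
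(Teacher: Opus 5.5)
Steps 1--3 are correct, and they are exactly how the paper obtains this corollary: set $\gamma=0$, $k=1$ in Theorem~\ref{thm-3.1}, and \eqref{Eq-33.3} becomes $5\rho^2+2\rho-3=(5\rho-3)(\rho+1)=0$. The corollary as stated does not assert sharpness, so this already finishes the proof.

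Your Step 4, however, is wrong as written. For $f(z)=z\,\frac{a-z}{1-az}$ one has $a_1=a$ and $a_n=-(1-a^2)a^{n-2}$ for $n\ge 2$. The left-hand side then collapses to
\[
a\rho+\frac{(1-a^2)\rho^2}{1-\rho},
\]
which is the paper's $F_1(a,\rho)$ with $k=1$. As $a\to1^-$ this tends to $\rho<1$, so the left-hand side does \emph{not} exceed $1$ for $a$ close to $1$. The limit $a\to1^-$ is the right one only in the classical case $k=0$.

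Here the expression is a concave quadratic in $a$. It is maximised at $a=(1-\rho)/(2\rho)$, which lies in $(0,1)$ for $\rho>1/3$, with maximum value $\frac{5\rho^2-2\rho+1}{4(1-\rho)}$. This exceeds $1$ exactly when $5\rho^2+2\rho-3>0$, i.e.\ when $\rho>3/5$. So sharpness is witnessed by taking $a=(1-\rho)/(2\rho)$, which equals $a=1/3$ at the critical radius, not by letting $a\to1^-$. Either correct Step 4 in this way or drop it.
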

\begin{rem}
	In particular, by setting $k=0$, our result structurally reduces to the coefficient inequality established by Allu \emph{et al.} \cite[Theorem 1]{Allu-Biswas-Mandal-arXiv-2026}.
\end{rem}
\begin{cor}
	Suppose that $f \in \mathcal{B}(\Omega_\gamma)$ and $f$ is given by \eqref{Eq-33.1}. Then 
	\begin{align*}
		\sum_{n=0}^{\infty}\frac{|a_n|}{(1-\gamma)^{n}}\rho^n+\left(\frac{1}{1+|a_0|}+\frac{\rho}{1-\rho}\right)\sum_{n=1}^{\infty}\frac{|a_n|^2}{(1-\gamma)^{2n}}\rho^{2n}\leq1,
	\end{align*}
	for $\rho\leq \rho_0$, where $\rho_0 =1/3$ is the unique root in $(0,1)$ of the equation $9\rho^2 - 6\rho + 1 = 0.$
\end{cor}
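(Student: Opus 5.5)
The plan is to derive the corollary directly from Theorem C by transplanting $f$ to the unit disk with the conformal map $\Phi(\xi)=(\xi-\gamma)/(1-\gamma)$ already used in the proof of Lemma \ref{Lem-3.1}. As a consistency check, it is also the case $k=0$ of Theorem \ref{thm-3.1}.

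\emph{Reduction to $\mathbb{D}$.} Let $f\in\mathcal{B}(\Omega_\gamma)$ with expansion \eqref{Eq-33.1} for $k=0$, and put $F=f\circ\Phi$. Then $F\in\mathcal{B}(\mathbb{D})$, and, exactly as in the proof of Lemma \ref{Lem-3.1},
\[
F(\xi)=\sum_{n=0}^{\infty} b_n\xi^n,\qquad b_n=\frac{a_n}{(1-\gamma)^n}.
\]
In particular $b_0=a_0$, because $(1-\gamma)^0=1$. Writing $r=\rho$ and $F_0=F-b_0$, the left-hand side of the corollary becomes
\[
\sum_{n=0}^{\infty}|b_n|r^n+\left(\frac{1}{1+|b_0|}+\frac{r}{1-r}\right)\sum_{n=1}^{\infty}|b_n|^2r^{2n}
=\sum_{n=0}^{\infty}|b_n|r^n+\left(\frac{1}{1+|b_0|}+\frac{r}{1-r}\right)\|F_0\|_r^2 .
\]
This is exactly the quantity in \eqref{Eq-22.1} of Theorem C applied to $F$.

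\emph{Applying Theorem C.} Theorem C gives that this quantity is at most $1$ whenever $r\le 1/(2+|b_0|)$. Since $|b_0|=|a_0|\le 1$, we have $1/(2+|b_0|)\ge 1/3$. Hence the inequality holds for every $\rho\le 1/3$, uniformly in $f$ and $\gamma$.

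\emph{Identifying $\rho_0$.} Setting $k=0$ in \eqref{Eq-33.3} gives $5\rho-2+\rho^{-1}+4\rho-4=0$. Multiplying by $\rho$ yields $9\rho^2-6\rho+1=(3\rho-1)^2=0$, whose unique root is $\rho_0=1/3$. This confirms the corollary as the $k=0$ specialization of Theorem \ref{thm-3.1}.

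\emph{Optimality.} If one also wants to show that $1/3$ cannot be improved, take $f_a=\varphi_a\circ\Phi^{-1}$ with $\varphi_a(\xi)=(a-\xi)/(1-a\xi)$. Then $b_n$ are the Taylor coefficients of $\varphi_a$. The majorant part alone already exceeds $1$ for $\rho>1/(1+2a)$, by the computation in the remark after Theorem A, and the added $H^2$ term is nonnegative. Letting $a\to1^-$ gives $1/(1+2a)\to 1/3$, so no radius larger than $1/3$ can work.

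The only real obstacle is bookkeeping: checking that the normalization by $(1-\gamma)^n$ in the majorant and in the $H^2$ term matches the coefficients $b_n$ of $F$ exactly, and that $|b_0|=|a_0|$ when $k=0$. Once these are checked, the analytic content is entirely supplied by Theorem C.
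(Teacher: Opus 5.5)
Your proposal is correct and follows essentially the paper's route: the corollary is the case $k=0$ of Theorem~\ref{thm-3.1}, whose proof likewise transplants $f$ to $\mathbb{D}$ via $\Phi$ and invokes the Ponnusamy--Vijayakumar--Wirths estimate that underlies Theorem C. Quoting Theorem C directly, with $1/(2+|a_0|)\ge 1/3$, is the cleaner justification, because the paper's step of maximizing $\psi_\rho$ at the critical point $x_0=(1-\rho)/(2\rho)$ breaks down at $k=0$: for $\rho<1/3$ this point lies outside $[0,1]$, and one computes $P_0(\rho)=(3\rho-1)^2/(4\rho(1-\rho))\ge 0$, so that step alone does not give the bound $1$, whereas your argument needs no such optimization.
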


\begin{proof}[\bf Proof of Theorem \ref{thm-3.1}]
    Let
    \[
    f(z)=\sum_{n=k}^{\infty}a_n\left(z+\frac{\gamma}{1+\gamma}\right)^n,
    \]
    where $f$ is analytic in $\Omega_\gamma$ and satisfies $|f(z)|\le1$ for all $z\in\Omega_\gamma$. Let $\Phi:\mathbb{D}\rightarrow\Omega_\gamma$ defined by 
    \begin{align*}
    	\Phi(z)=\frac{z-\gamma}{1-\gamma}\quad\mbox{for}\;z\in\mathbb{D}.
    \end{align*}
    Since $\Phi:\mathbb{D}\to\Omega_\gamma$ is conformal {map}, the composition
    $f\circ\Phi:\mathbb{D}\to\mathbb{D}$ is analytic {function}. Moreover, for every
    $z\in\Omega_\gamma$, the point $\xi=\gamma+(1-\gamma)z$ belongs to $\mathbb{D}$. Consequently,
    \begin{align*}
    	(f\circ\Phi)(\xi)=f\left(\frac{\xi-\gamma}{1-\gamma}\right)=\sum_{n=k}^{\infty}\frac{a_n}{(1-\gamma)^n}\xi^n=\xi^k\sum_{n=0}^{\infty}\frac{a_{n+k}}{(1-\gamma)^{n+k}}\xi^{n}=\xi^kg(\xi),
    \end{align*}
    where
    \begin{align*}
    	g(\xi)=\sum_{n=0}^{\infty}b_n\xi^n\;\;{ \mbox{where}}\;b_n=\frac{a_{n+k}}{(1-\gamma)^{n+k}}.
    \end{align*}
    Since $f\circ\Phi$ is analytic, it follows that $g$ is analytic in
    $\mathbb{D}$ and satisfies $|g(z)|\le1$ for all $z\in\mathbb{D}$.
    Applying inequality~(4) from the proof of Theorem E, we obtain
    \begin{align}\label{eq-3.1}
    	\sum_{n=0}^{\infty}|b_n|\rho^{n}\leq |b_0|+\frac{\rho}{1-\rho}(1-|b_0|^2)-\left(\frac{1}{1+|b_0|}+\frac{\rho}{1-\rho}\right)||g_0||^2_{\rho}.
    \end{align}
    Using the inequality \eqref{eq-3.1}, we obtain
    \begin{align*}
    	\sum_{n=k}^{\infty}\frac{|a_n|}{(1-\gamma)^n}\rho^n&=\sum_{n=0}^{\infty}|b_n|\rho^{n+k}\\&\leq \rho^k\left(|b_0|+\frac{\rho}{1-\rho}(1-|b_0|^2)-\left(\frac{1}{1+|b_0|}+\frac{\rho}{1-\rho}\right)||g_0||^2_r\right).
    \end{align*}
    This implies that
    \begin{align*}
    	\sum_{n=k}^{\infty}\frac{|a_n|}{(1-\gamma)^n}\rho^n&+\left(\frac{(1-\gamma)^k}{(1-\gamma)^k+|a_k|}+\frac{\rho}{1-\rho}\right)\sum_{n=k+1}^{\infty}\frac{|a_n|^2}{(1-\gamma)^{2n}}\rho^{2n-k}\\&\leq \rho^k\left(|b_0|+\frac{\rho}{1-\rho}(1-|b_0|^2)\right)=:\psi_{\rho}(|b_0|),
    \end{align*}
     where
    \begin{align*}
    	\psi_{\rho}(x)=\rho^k\left(x+\frac{\rho}{1-\rho}(1-x^2)\right)\;\mbox{for}\;|b_0|=x\in[0,1],\;\mbox{and}\;\rho\in[0,1).
    \end{align*}
   
    We just need to maximize $\psi(x,\rho)$ (with respect to $x$) over the interval $[0, 1]$. We see that $\psi$ has a critical point at $x_0 = (1-\rho)/(2\rho)$ and obtain that the maximum occurs at this point, so that
    \begin{align*}
    	\psi_{\rho}(x)\le \psi_{\rho}(x_0) &= \psi\left(\frac{1-\rho}{2\rho},\rho\right)\\&=\rho^k\left(\frac{1-\rho}{2\rho}+\frac{\rho}{1-\rho}\left(1-\frac{(1-\rho)^2}{4\rho^2}\right)\right)\\&=\rho^{k-1}\left(\frac{1-\rho}{2}+\frac{(3\rho-1)(1+\rho)}{4(1-\rho)}\right)\\&:=1+P_k(\rho),
    \end{align*}
    where 
    \begin{align*}
    	P_k(\rho)=\rho^{k-1}\left(\frac{1-\rho}{2}+\frac{(3\rho-1)(1+\rho)}{4(1-\rho)}\right)-1.
    \end{align*}
    To complete the proof, it suffices to establish that $P_k(\rho)\le0$,
    since this immediately yields $\psi_\rho(x)\le1$. Observe that
    $P_k$ is increasing on $(0,1)$, satisfies $P_k(0)=-1$, and
    \[
    \lim_{\rho\to1^-}P_k(\rho)=+\infty.
    \]
    Hence, $P_k(\rho)=0$ possesses a unique solution
    $\rho_k$ {in }$(0,1)$, and therefore $P_k(\rho)\le0,\;\text{whenever } \rho\le\rho_k,$ where $\rho_k$ is the unique positive root of
    \begin{align*}
    	5\rho^{k+1}-2\rho^k+\rho^{k-1}+4\rho-4=0.
    \end{align*}
    To establish the sharpness, let $f=f_a\circ\phi,$ where $\phi:\Omega_\gamma\to\mathbb{D}$ is given by
    $\phi(z)=\gamma+(1-\gamma)z$, and
    \[
    f_a(z)=z^k\left(\frac{a-z}{1-az}\right),\; a\in(0,1).
    \]
    Since $\phi$ is a conformal mapping of $\Omega_\gamma$ onto $\mathbb{D}$,
    it follows that $f$ maps $\Omega_\gamma$ univalently onto $\mathbb{D}$.
    Hence,
    \begin{align*}
    	f(z)=(1-\gamma)^k\left(z+\frac{\gamma}{1-\gamma}\right)^k\left(\frac{a-(1-\gamma)\left(z+\frac{\gamma}{1-\gamma}\right)}{1-a(1-\gamma)\left(z+\frac{\gamma}{1-\gamma}\right)}\right)=\sum_{n=k}^{\infty}C_n\left(z+\frac{\gamma}{1-\gamma}\right)^n,
    \end{align*}
    where 
    \begin{align*}
    	C_k=a(1-\gamma)^k\;\mbox{and}\; C_n=-(1-\gamma)^na^{n-k-1}(1-a^2)
    \end{align*}
     for $n\geq k+1$. For this function, we obtain
    \begin{align*}
    	&\sum_{n=k}^{\infty}\frac{|C_n|}{(1-\gamma)^{n}}\rho^n+\left(\frac{(1-\gamma)^k}{(1-\gamma)^k+|C_k|}+\frac{\rho}{1-\rho}\right)\sum_{n=k+1}^{\infty}\frac{|C_n|^2}{(1-\gamma)^{2n}}\rho^{2n-k}\\&=a\rho^k+(1-a^2)\sum_{n=k+1}^{\infty}a^{n-k-1}\rho^n +\left(\frac{1}{1+a}+\frac{\rho}{1-\rho}\right)(1-a^2)^2\sum_{n=k+1}^{\infty}a^{2(n-k-1)}\rho^{2n-k}\\&=\rho^k\left(a+\frac{(1-a^2)\rho}{1-a\rho}+\frac{(1-a^2)(1-a)\rho^2}{(1-\rho)(1-a\rho)}\right)\\&=\rho^k\left(a+\frac{(1-a^2)\rho}{1-\rho}\right)\\&:=F_1(a,\rho),
    \end{align*}
    where 
    \begin{align*}
    	F_1(a,\rho)=\rho^k\left(a+\frac{(1-a^2)\rho}{1-\rho}\right).
    \end{align*}
    It is readily verified that $F_1(a,\rho)>1$ for every $\rho>\rho_k$ as
    \[
    a\to\frac{1-\rho}{2\rho},
    \]
    where $\rho\in(1/3,1)$. This establishes the sharpness of the result and completes the proof.
    \end{proof}  
    
    \begin{figure}[htbp]
    	\centering
    	\includegraphics[width=0.95\textwidth]{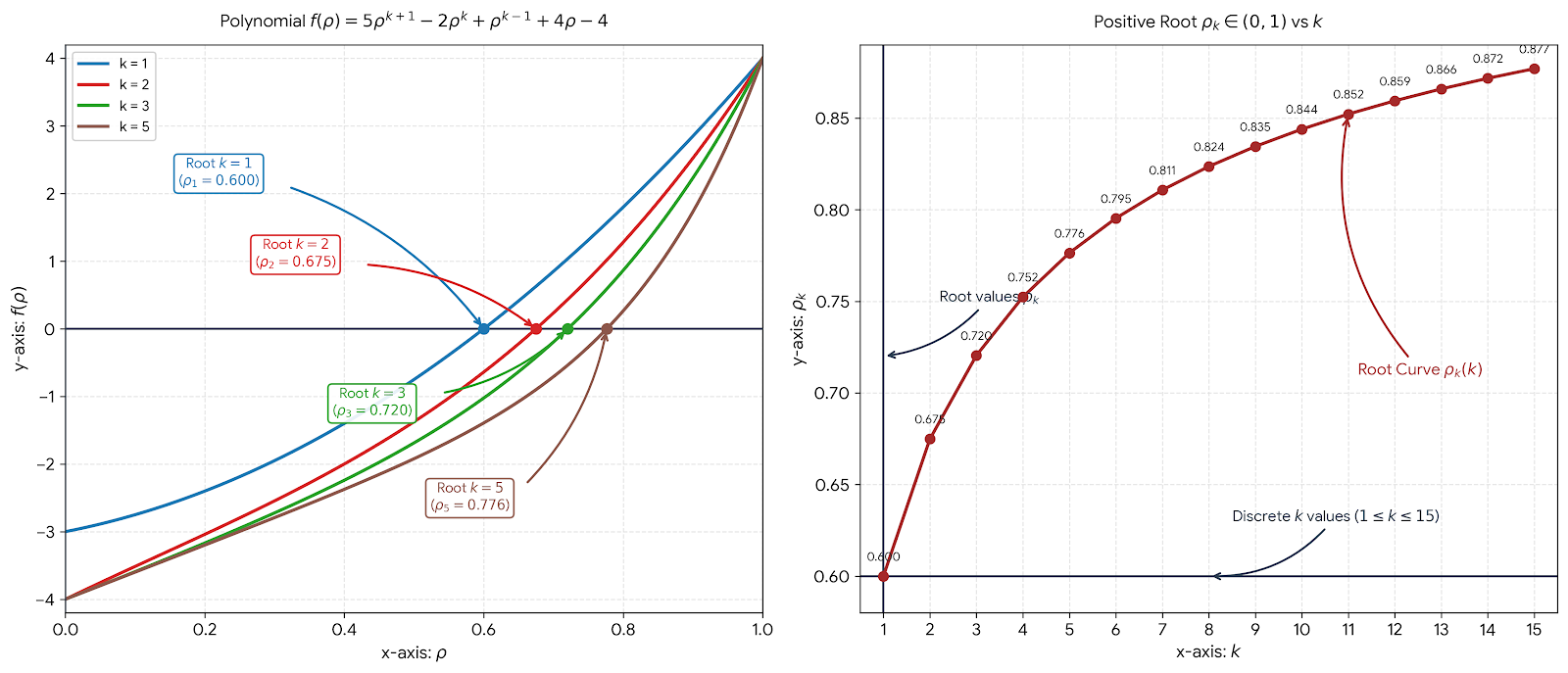}
    	\caption{Analysis of the polynomial $f(\rho)$ (left) and the trajectory of its unique positive root $\rho_k$ across values of $k$ (right).}
    	\label{fig:theorem_visuals}
    \end{figure}
    We obtain the following sharp Bohr-type inequality for functions in half-plan $\mathbb{H}_1 = \{ w \in \mathbb{C} : \operatorname{Re}(w) < 1 \}$.
    \begin{thm}\label{thm-3.1-halfplane}
    	Let $\mathbb{H}_1 = \{ w \in \mathbb{C} : \operatorname{Re}(w) < 1 \}$ and suppose that $f \in \mathcal{B}(\mathbb{H}_1)$ possesses the Taylor expansion 
    	\begin{align}\label{Eq-halfplane-exp}
    		f(w) = \sum_{n=k}^{\infty} A_n \left( w + 1 \right)^n, \quad w \in \mathbb{H}_1.
    	\end{align}\label{Eq-33.4}
    	Then, the sharp inequality
    	\begin{align*}
    		\sum_{n=k}^{\infty} |A_n| \rho^n + \left(\frac{1}{1+|A_k|} + \frac{\rho}{1-\rho}\right) \sum_{n=k+1}^{\infty} |A_n|^2 \rho^{2n-k} \le 1,
    	\end{align*}
    	holds for $\rho \le \rho_k$, where $\rho_k$ is the unique positive root in $(0,1)$ of the algebraic equation \eqref{Eq-33.3}. The radius $ \rho_k $ is {the }best possible.
    \end{thm}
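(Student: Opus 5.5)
The plan is to reduce Theorem~\ref{thm-3.1-halfplane} to the unit-disk case $\gamma=0$ of Theorem~\ref{thm-3.1}, via a translation that keeps the coefficients $A_n$ unscaled.

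\textbf{Reduction to the disk.} The translated disk $\{w:|w+1|<1\}$ lies in $\{\operatorname{Re} w<0\}\subset\mathbb{H}_1$. So for $f\in\mathcal{B}(\mathbb{H}_1)$ I define $F(\xi)=f(\xi-1)$ for $\xi\in\mathbb{D}$. Then $F\in\mathcal{B}(\mathbb{D})$ and, by \eqref{Eq-halfplane-exp}, $F(\xi)=\sum_{n\ge k}A_n\xi^n$. This is the expansion \eqref{Eq-33.1} with $\gamma=0$ and $a_n=A_n$, so Theorem~\ref{thm-3.1} with $\gamma=0$ applies to $F$. For $\gamma=0$ the weight $(1-\gamma)^k/((1-\gamma)^k+|a_k|)$ becomes $1/(1+|A_k|)$, and the factors $(1-\gamma)^{-n}$ disappear. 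The inequality of Theorem~\ref{thm-3.1} is then literally the claimed inequality, valid for $\rho\le\rho_k$, with $\rho_k$ the root of \eqref{Eq-33.3}, which does not depend on $\gamma$.

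Spelled out, this repeats the proof of Theorem~\ref{thm-3.1}. Write $F=\xi^k g$, where $g(\xi)=\sum_{m\ge0}A_{m+k}\xi^m$ satisfies $|g|\le1$ by Schwarz's lemma. Apply the refined estimate \eqref{eq-3.1} to $g$ and multiply by $\rho^k$. This bounds the left-hand side by $\psi_\rho(|A_k|)$, and maximizing $\psi_\rho$ at $x_0=(1-\rho)/(2\rho)$ gives $1+P_k(\rho)\le1$ for $\rho\le\rho_k$.

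\textbf{Sharpness (main obstacle).} The extremal functions of Theorem~\ref{thm-3.1} at $\gamma=0$ are $f_a(w+1)$ with $f_a(z)=z^k(a-z)/(1-az)$. These are bounded only on $\{|w+1|<1\}$, not on all of $\mathbb{H}_1$. I would therefore first try to embed them using the exhausting family $\Omega_\gamma\uparrow\mathbb{H}_1$ and the conformal map $\phi(w)=(w+1)/(3-w)$ of $\mathbb{H}_1$ onto $\mathbb{D}$ with $\phi(-1)=0$. Concretely, I would test $f=f_a\circ\phi$ and compute its coefficients about $-1$. These are $A_k=a4^{-k}$, with the subsequent $A_n$ obtained from the expansion of $u/(4-u)$, $u=w+1$. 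I would then check whether the functional $F_1$ can exceed $1$ for $\rho$ just above $\rho_k$.

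I expect this check to fail, because the factors $4^{-n}$ shrink the majorant. If it does, the honest conclusion is that the inequality holds for $\rho\le\rho_k$, but sharpness within $\mathcal{B}(\mathbb{H}_1)$ needs a different extremal family or a different normalization of the expansion. A candidate normalization is to rescale $A_n$ by the factor $2^n$ coming from the disk $|w+1|<2=\Omega_{1/2}$, or to pass to a limit of the $\Omega_\gamma$-normalized coefficients as $\gamma\to1^-$. Settling which normalization makes $\rho_k$ best possible is the step I regard as the real difficulty.
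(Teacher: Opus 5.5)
Your proof of the inequality is correct, and it is in substance the paper's argument. The paper restricts $f$ to $\Omega_\gamma$ and sets $A_n=a_n/(1-\gamma)^n$ ``by comparing coefficients''. Since the two expansions are centred at $-1$ and at $-\gamma/(1-\gamma)$, that identity only makes sense as the substitution $w+1=\gamma+(1-\gamma)z$, i.e.\ as your restriction to $\{|w+1|<1\}$ followed by the case $\gamma=0$ of Theorem~\ref{thm-3.1}. One detail you inherit from the paper: $x_0=(1-\rho)/(2\rho)$ lies in $[0,1]$ only for $\rho\ge 1/3$; for smaller $\rho$ use $\psi_\rho(x)\le\psi_\rho(1)=\rho^k\le 1$.

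The gap is sharpness, and it cannot be closed: for $\mathcal{B}(\mathbb{H}_1)$ with the expansion about $-1$, the radius $\rho_k$ is not best possible. Since $\{|w+1|<2\}=\Omega_{1/2}\subset\mathbb{H}_1$, the function $\xi\mapsto f(2\xi-1)=\sum_{n\ge k}2^nA_n\xi^n$ lies in $\mathcal{B}(\mathbb{D})$, so $|A_n|\le 2^{-n}$. Bounding the weight by $1/(1-\rho)$ then gives, for every admissible $f$,
\[
\sum_{n=k}^{\infty}|A_n|\rho^n+\Big(\frac{1}{1+|A_k|}+\frac{\rho}{1-\rho}\Big)\sum_{n=k+1}^{\infty}|A_n|^2\rho^{2n-k}\le\frac{(\rho/2)^k}{1-\rho/2}+\frac{4^{-k-1}\rho^{k+2}}{(1-\rho)(1-\rho^2/4)}.
\]
For $k=1$ and $\rho=7/10>3/5=\rho_1$ the right-hand side is $7/13+343/4212<0.62$. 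The same bound stays below $1$ on a right neighbourhood of $\rho_k$ for every $k\ge 1$. So no extremal family exists, and your expectation that $f_a\circ\phi$ fails reflects this, not a poor choice of test function.

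The paper's sharpness argument does not get around this. Its test functions $z\mapsto f_a(\gamma+(1-\gamma)z)$ have modulus $>1$ on $\mathbb{H}_1\setminus\overline{\Omega_\gamma}$, so they are not in $\mathcal{B}(\mathbb{H}_1)$. Moreover, as $\gamma\to 1^-$ they converge to the constant $-1$.

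Your alternative normalizations do not restore $\rho_k$ as a consequence of Theorem~\ref{thm-3.1} either. The map $\Phi_\gamma(\xi)=(\xi-\gamma)/(1-\gamma)$ is a dilation of $\mathbb{H}_1$ about the point $1$. Hence the $\Omega_\gamma$-normalized coefficients (your $2^nA_n$ is the case $\gamma=1/2$) always range over the Taylor coefficients at $0$ of functions in $\mathcal{B}(\mathbb{H}_1)$, independently of $\gamma$, and the limit $\gamma\to1^-$ adds nothing. That class is a proper subclass of $\mathcal{B}(\mathbb{D})$. It excludes the extremals $f_a$ and poses a different extremal problem: already for the plain majorant sum its radius is $1/2$, not $1/3$ (write $f=h\circ\phi$ with $h\in\mathcal{B}(\mathbb{D})$ and $\phi(\xi)=\xi/(2-\xi)$, which has positive coefficients).

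What your reduction does prove is the inequality on $\mathcal{B}(\mathbb{H}_1)$. It also gives sharpness of $\rho_k$ for the larger class of functions bounded by $1$ on $\{|w+1|<1\}$, tested with $f_a(w+1)$. The sharpness assertion for $\mathcal{B}(\mathbb{H}_1)$ itself should be dropped.
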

    \begin{proof}[\bf Proof of Theorem \ref{thm-3.1-halfplane}]
    	The right-bounded half-plane $\mathbb{H}_1 = \{ w \in \mathbb{C} : \operatorname{Re}(w) < 1 \}$ is established as the limit of a family of nested, internally tangent shifted disks $\Omega_{\gamma}$ with $\gamma \in [0,1)$.\vspace{1.2mm}
    	
    	As $\gamma \to 1^-$, the family of domains asymptotically deforms and expands to fill the entire half-plane $\mathbb{H}_1$. Consider an analytic function $f \in \mathcal{B}(\mathbb{H}_1)$ possessing the Taylor expansion about the shifted boundary anchor point:  
    	\[
    	f(w) = \sum_{n=k}^{\infty} A_n (w + 1)^n.
    	\]
    	Because $\Omega_{\gamma} \subsetneq \mathbb{H}_1$ for all $\gamma \in [0,1)$, the function $f$ restricted to any shifted disk $\Omega_{\gamma}$ belongs to $\mathcal{B}(\Omega_{\gamma})$. Thus, $f(w)$ matches the formulation of Theorem \ref{thm-3.1} given {by}  
    	\[
    	f(z) = \sum_{n=k}^{\infty} a_n \left( z + \frac{\gamma}{1-\gamma} \right)^n.
    	\]
    	Comparing the coefficients yields the identity relation  
    	\[
    	a_n = A_n (1-\gamma)^n \;\mbox{if, and only if, } A_n = \frac{a_n}{(1-\gamma)^n}.
    	\]
    	Applying Theorem \ref{thm-3.1} to $f \in \mathcal{B}(\Omega_{\gamma})$ yields 
    	\begin{align*}
    		\sum_{n=k}^{\infty} \frac{|a_{n}|}{(1-\gamma)^{n}} \rho^{n} + \left( \frac{(1-\gamma)^{k}}{(1-\gamma)^{k}+|a_{k}|} + \frac{\rho}{1-\rho} \right) \sum_{n=k+1}^{\infty} \frac{|a_{n}|^{2}}{(1-\gamma)^{2n}} \rho^{2n-k} \le 1.
    	\end{align*}
    	Substituting the coefficient conversion identity $A_n = {a_n}/{(1-\gamma)^n}$ into this inequality directly { produces}
    	\begin{align*}
    		\sum_{n=k}^{\infty} |A_n| \rho^n + \left( \frac{1}{1+|A_k|} + \frac{\rho}{1-\rho} \right) \sum_{n=k+1}^{\infty} |A_n|^2 \rho^{2n-k} \le 1.
    	\end{align*}
    	This inequality holds for all $\rho \le \rho_k$, where $\rho_k \in (0,1)$ is the unique positive root of the algebraic constraint equation
    	\begin{align*}
    		5\rho^{k+1}-2\rho^{k}+\rho^{k-1}+4\rho-4=0.
    	\end{align*}
    	The sharpness of the radius $\rho_k$ translates directly from the optimality verified in Theorem \ref{thm-3.1} via the asymptotic behavior of the testing function  
    	\begin{align*}
    		f(z) &= {\left(\gamma + (1-\gamma)z\right)}^k \left( \frac{a - {\left(\gamma + (1-\gamma)z\right)}}{1 - a{\left(\gamma + (1-\gamma)z\right)}} \right)\\&= a \left(\gamma + (1-\gamma)z\right)^k + (a^2 - 1) \sum_{m=1}^\infty a^{m-1} \left(\gamma + (1-\gamma)z\right)^{m+k}.
    	\end{align*}
    	Extracting the coefficient of $z^n$ from the expansion of $f$ gives that 
    	\begin{align*}
    		c_n = (1-\gamma)^n \left( a \binom{k}{n} \gamma^{k-n} + (a^2 - 1) \sum_{m=\max(1, n-k)}^\infty a^{m-1} \binom{m+k}{n} \gamma^{m+k-n} \right).
    	\end{align*}
    	As $\gamma \to 1^-$, the geometric bound is preserved, confirming that $\rho_k$ is the best possible radius.
    \end{proof}
By applying Lemma \ref{Lem-3.1}, we provide a significant refinement and extension of item (b) of Theorem D by translating the problem into the generalized framework of the shifted disk $\Omega_{\gamma}$.
\begin{thm}\label{thm-3.2}
	Suppose that $f \in \mathcal{B}(\Omega_\gamma)$ and $f$ is given by \eqref{Eq-33.1}. Then 
	\begin{align*}
		\sum_{n=k}^{\infty}\frac{|a_n|}{(1-\gamma)^{n}}\rho^n+\left(\frac{r^{-k}}{1+a}+\frac{\rho^{1-k}}{1-\rho}\right)||f||_\rho^2\leq1,
	\end{align*}
	for $\rho\leq\rho^*_k$, where $\rho^*_k$ is the positive root in $(0,1)$ of the equation
	\begin{align}\label{Eq-33.6}
	\rho^{k+1}-3\rho^k-2\rho+2=0.
	\end{align}
	The radius $\rho^*_k$ is best possible.
\end{thm}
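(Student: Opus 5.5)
The plan is to follow the proof of Theorem \ref{thm-3.1} and transplant the problem to the unit disk, this time keeping the $H^2$-norm of the full function. I read the statement with $r=\rho$ and $a=|b_0|=|a_k|/(1-\gamma)^k$, and $\|f\|_\rho^2=\sum_{n\ge k}|a_n|^2\rho^{2n}/(1-\gamma)^{2n}$. This normalisation reduces to Theorem D(b) when $k=1$, $\gamma=0$; indeed \eqref{Eq-33.6} then becomes $\rho^2-5\rho+2=0$, whose root in $(0,1)$ is $(5-\sqrt{17})/2$.

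\textbf{Setup.} As in Lemma \ref{Lem-3.1}, put $\Phi(\xi)=(\xi-\gamma)/(1-\gamma)$ and write $f\circ\Phi(\xi)=\xi^k g(\xi)$. Here $g(\xi)=\sum_{m\ge0} b_m\xi^m$ with $b_m=a_{m+k}/(1-\gamma)^{m+k}$ and $|g|\le1$ on $\mathbb{D}$. Set $x=|b_0|\in[0,1]$ and $g_0=g-b_0$. Then
\[
\sum_{n\ge k}\frac{|a_n|}{(1-\gamma)^n}\rho^n=\rho^k\sum_{m\ge0}|b_m|\rho^m,
\qquad
\|f\|_\rho^2=\rho^{2k}\bigl(x^2+\|g_0\|_\rho^2\bigr).
\]
Consequently the weighted norm term equals
\[
\rho^k\Bigl(\frac{1}{1+x}+\frac{\rho}{1-\rho}\Bigr)\bigl(x^2+\|g_0\|_\rho^2\bigr).
\]

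\textbf{Key estimate.} I multiply the inequality \eqref{eq-3.1} by $\rho^k$. The $\|g_0\|_\rho^2$ term it produces has exactly the same weight as the one above, so the two cancel. The left-hand side of the theorem is therefore bounded by
\[
\rho^k\Bigl(x+\frac{\rho}{1-\rho}(1-x^2)+\frac{x^2}{1+x}+\frac{\rho x^2}{1-\rho}\Bigr)=\rho^k\Bigl(x+\frac{x^2}{1+x}+\frac{\rho}{1-\rho}\Bigr).
\]
Unlike in Theorem \ref{thm-3.1}, this bound is increasing in $x$, so its maximum is at $x=1$, where it equals $\rho^k(3-\rho)/(2(1-\rho))$. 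The requirement $\rho^k(3-\rho)\le 2(1-\rho)$ is equivalent to $\rho^{k+1}-3\rho^k-2\rho+2\ge0$.

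\textbf{Radius.} Let $h(\rho)=\rho^k(3-\rho)-2(1-\rho)$. Its derivative is $h'(\rho)=\rho^{k-1}(3k-(k+1)\rho)+2>0$ on $(0,1)$, and $h(0)<0<h(1)=2$. So \eqref{Eq-33.6} has a unique root $\rho_k^*\in(0,1)$, and the inequality holds exactly for $\rho\le\rho_k^*$.

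\textbf{Sharpness.} I use the same extremal family as in Theorem \ref{thm-3.1}, namely $f=f_a\circ\phi$ with $f_a(z)=z^k(a-z)/(1-az)$. There it was computed that
\[
\rho^k\sum_m|b_m|\rho^m+\rho^k\Bigl(\frac{1}{1+a}+\frac{\rho}{1-\rho}\Bigr)\|g_0\|_\rho^2=\rho^k\Bigl(a+\frac{(1-a^2)\rho}{1-\rho}\Bigr),
\]
so \eqref{eq-3.1} is an equality for this family. Adding the remaining term $\rho^k\bigl(\frac{1}{1+a}+\frac{\rho}{1-\rho}\bigr)a^2$ gives the total $\rho^k\bigl(a+\frac{a^2}{1+a}+\frac{\rho}{1-\rho}\bigr)$. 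This tends to $\rho^k(3-\rho)/(2(1-\rho))$ as $a\to1^-$, which exceeds $1$ whenever $\rho>\rho_k^*$.

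\textbf{Main obstacle.} The step I expect to need the most care is confirming that \eqref{eq-3.1} (the inequality from the proof of Theorem E) is really attained by $(a-\xi)/(1-a\xi)$. Without that, the limit $a\to1^-$ would not show the radius is best possible. I would re-derive it directly from the geometric-series sums already computed in the proof of Theorem \ref{thm-3.1}.
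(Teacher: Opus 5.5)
Your proposal is correct and follows the paper's proof step for step. It uses the same reduction $f\circ\Phi=\xi^k g$ and the same application of \eqref{eq-3.1}, in which the $\|g_0\|_\rho^2$ terms cancel to leave $\rho^k\bigl(x+\frac{x^2}{1+x}+\frac{\rho}{1-\rho}\bigr)$; it then uses monotonicity in $x$ and arrives at the same radius equation (both arguments tacitly need $k\ge1$, since for $k=0$ your $h$ satisfies $h(0)=1>0$ and no root exists). The only difference is in sharpness: the paper simply takes $f(z)=(\gamma+(1-\gamma)z)^k$, i.e.\ $g\equiv1$ with $|b_0|=1$, which attains $\rho^k\bigl(\frac32+\frac{\rho}{1-\rho}\bigr)$ exactly, so your limit $a\to1^-$ and your \emph{main obstacle} of checking equality in \eqref{eq-3.1} for the M\"obius family are unnecessary (though your verification of that equality is correct).
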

\begin{rem}
	In particular, by setting $k=1$ and $\gamma=0$, our result reduces to the {result} established by Ponnusamy \emph{et al.} \cite[Theorem 3]{Ponnusamy-Vijayakumar-Wirths-RM-2020} ({\textit{i.e.,}} Theorem D).
\end{rem}
\begin{cor}
	Suppose that $f \in \mathcal{B}$ and $f(z) = \sum_{n=1}^{\infty} a_n z^n$. Then 
	\begin{align*}
		\sum_{n=1}^{\infty}|a_n|\rho^n + \left(\frac{\rho^{-1}}{1+|a_1|}+\frac{1}{1-\rho}\right) \|f\|^2_{\rho} \leq 1,
	\end{align*}
	for $\rho \leq \rho_0:={(5-\sqrt{17})}/{2}$, where $\rho_0 $ is the unique positive root in $(0,1)$ of the quadratic equation $\rho^2 - 5\rho + 2 = 0.$
\end{cor}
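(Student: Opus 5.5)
The corollary is the case $k=1$, $\gamma=0$ of Theorem \ref{thm-3.2}. Indeed, \eqref{Eq-33.6} with $k=1$ reads $\rho^2-3\rho-2\rho+2=\rho^2-5\rho+2=0$, whose root in $(0,1)$ is $(5-\sqrt{17})/2$. Since the proof of Theorem \ref{thm-3.2} is not yet available at this point, I would also give a short self-contained argument, which is the model for the general case.

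\textbf{Reduction to a function without a zero.} Write $f(z)=zg(z)$ with $g(z)=\sum_{m\ge0}b_m z^m$ and $b_m=a_{m+1}$. By Schwarz's lemma $g\in\mathcal{B}$. Put $a=|a_1|=|b_0|$ and $g_0=g-b_0$. Then
\[\sum_{n\ge1}|a_n|\rho^n=\rho\sum_{m\ge0}|b_m|\rho^m, \qquad \|f\|_\rho^2=\rho^2\|g\|_\rho^2=\rho^2\bigl(a^2+\|g_0\|_\rho^2\bigr).\]

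\textbf{Applying the refined estimate.} Apply to $g$ the estimate \eqref{eq-3.1} coming from Theorem C:
\[\sum_{m\ge0}|b_m|\rho^m\le a+\frac{\rho}{1-\rho}(1-a^2)-\Bigl(\frac{1}{1+a}+\frac{\rho}{1-\rho}\Bigr)\|g_0\|_\rho^2 .\]
Multiplying by $\rho$ and adding $\bigl(\frac{\rho^{-1}}{1+a}+\frac{1}{1-\rho}\bigr)\rho^2(a^2+\|g_0\|_\rho^2)$, the $\|g_0\|_\rho^2$ terms cancel exactly. This cancellation is the key point: the weight in front of $\|f\|^2_\rho$ was chosen precisely to produce it. What remains is
\[\rho\Bigl(a+\frac{a^2}{1+a}\Bigr)+\frac{\rho^2}{1-\rho}.\]

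\textbf{Maximizing over $a$.} The bracket is increasing in $a\in[0,1]$, so the whole expression is at most its value at $a=1$, namely $\frac{3\rho}{2}+\frac{\rho^2}{1-\rho}$. This is $\le1$ if and only if $3\rho(1-\rho)+2\rho^2\le2(1-\rho)$, that is, $\rho^2-5\rho+2\ge0$. On $(0,1)$ this holds exactly when $\rho\le(5-\sqrt{17})/2$.

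\textbf{Sharpness.} I would check sharpness with $f(z)=z$, for which $a_1=1$ and $\|f\|_\rho^2=\rho^2$. The left-hand side then equals $\frac{3\rho}{2}+\frac{\rho^2}{1-\rho}$ identically, which exceeds $1$ for every $\rho>(5-\sqrt{17})/2$. Alternatively, one can use $z(a-z)/(1-az)$ and let $a\to1^-$.

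The only real obstacle is verifying the exact cancellation of the $\|g_0\|_\rho^2$ terms and the monotonicity in $a$; both are routine once the bookkeeping is set up.
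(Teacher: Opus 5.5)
Your proposal is correct and follows essentially the same route as the paper. The paper obtains the corollary as the case $k=1$, $\gamma=0$ of Theorem~\ref{thm-3.2}, whose proof likewise factors $f=z^k g$, applies \eqref{eq-3.1} to $g$, lets the $\|g_0\|_\rho^2$ terms cancel against the chosen weight, bounds the resulting increasing function of $|b_0|$ by its value at $|b_0|=1$, and uses the extremal $(\gamma+(1-\gamma)z)^k$, which is $f(z)=z$ here, for sharpness.
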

 {As an immediate geometric consequence, we obtain the following sharp refined Bohr-type inequality embedded with the $H^2$-norm.}
\begin{thm}\label{thm-3.2-halfplane}
	Let $\mathbb{H}_1 = \{ w \in \mathbb{C} : \operatorname{Re}(w) < 1 \}$ and assume that $f \in \mathcal{B}(\mathbb{H}_1)$ has the series representation \eqref{Eq-33.4}.	Then, 
	\begin{align*}
		\sum_{n=k}^{\infty} |A_n| \rho^n + \left( \frac{\rho^{-k}}{1 + |A_k|} + \frac{\rho^{1-k}}{1 - \rho} \right) \|f\|_\rho^2 \le 1,
	\end{align*}
	for $\rho \le \rho^*_k$, where $\|f\|_\rho^2 = \sum_{n=k}^{\infty} |A_n|^2 \rho^{2n}$, and $\rho^*_k$ is the unique positive root in $(0,1)$ of the equation \eqref{Eq-33.6}. The radius $\rho^*_k$ is best possible.
\end{thm}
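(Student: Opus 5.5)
The plan is to work with the unit disk centred at the expansion point $w=-1$ instead of passing through the whole family $\Omega_\gamma$. Since $\{|w+1|<1\}\subset\{\operatorname{Re} w<0\}\subset\mathbb{H}_1$, the function $F(\xi)=f(\xi-1)=\sum_{n\ge k}A_n\xi^n$ is an analytic self-map of $\overline{\mathbb{D}}$ with a zero of order at least $k$ at the origin. By the Schwarz-lemma argument used in Lemma \ref{Lem-3.1}, we can write $F(\xi)=\xi^k g(\xi)$ with $g(\xi)=\sum_{m\ge0}b_m\xi^m$, $b_m=A_{m+k}$, and $|g|\le1$ in $\mathbb{D}$. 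Put $x=|b_0|=|A_k|\in[0,1]$ and $g_0=g-b_0$.

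The first step applies to $g$ the refined inequality \eqref{eq-3.1} (the estimate behind Theorems C and E):
\[
\sum_{m\ge0}|b_m|\rho^m+\Big(\frac{1}{1+x}+\frac{\rho}{1-\rho}\Big)\|g_0\|_\rho^2\le x+\frac{\rho}{1-\rho}(1-x^2).
\]
The second step rewrites the functional in the statement in terms of $g$. We have $\sum|A_n|\rho^n=\rho^k\sum|b_m|\rho^m$ and $\|f\|_\rho^2=\rho^{2k}\big(x^2+\|g_0\|_\rho^2\big)$. Hence the weight $\big(\frac{\rho^{-k}}{1+x}+\frac{\rho^{1-k}}{1-\rho}\big)\rho^{2k}$ equals $\rho^k\big(\frac{1}{1+x}+\frac{\rho}{1-\rho}\big)$, which is exactly $\rho^k$ times the weight of $\|g_0\|^2_\rho$ in the inequality above. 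Multiplying that inequality by $\rho^k$, the left-hand side of the theorem is therefore at most
\[
\rho^k\Big(x+\frac{\rho(1-x^2)}{1-\rho}+\frac{x^2}{1+x}+\frac{\rho x^2}{1-\rho}\Big)=\rho^k\Big(x+\frac{x^2}{1+x}+\frac{\rho}{1-\rho}\Big).
\]
This expression is increasing in $x$, so its maximum on $[0,1]$ is $\rho^k\big(\tfrac32+\tfrac{\rho}{1-\rho}\big)$, attained at $x=1$. The condition $\rho^k\big(\tfrac32+\tfrac{\rho}{1-\rho}\big)\le1$ is equivalent to $\rho^k(3-\rho)\le 2(1-\rho)$, that is, $\rho^{k+1}-3\rho^k-2\rho+2\ge0$. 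The left side of this last inequality equals $2$ at $\rho=0$ and $-2$ at $\rho=1$, and I would check that it is monotone on $(0,1)$. It then has a unique root $\rho_k^*$ of \eqref{Eq-33.6} in $(0,1)$, and the inequality holds for $\rho\le\rho^*_k$. (Alternatively, one can restrict $f$ to $\Omega_\gamma$ and invoke Theorem \ref{thm-3.2} with the conversion $A_n=a_n/(1-\gamma)^n$, as in the proof of Theorem \ref{thm-3.1-halfplane}; the direct argument above avoids tracking that normalization.)

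The main obstacle is sharpness. The bound is extremal only as $x=|A_k|\to1$. For $\gamma=0$ and the unit disk this is witnessed by $\xi^k\varphi_a(\xi)$ with $a\to1^-$. In $\mathbb{H}_1$, however, $(w+1)^k\varphi_a(w+1)$ is not bounded by $1$, because $|w+1|$ is unbounded on $\mathbb{H}_1$. The natural substitute is $T(w)=(w+1)/(3-w)$, which maps $\mathbb{H}_1$ onto $\mathbb{D}$ with $T(-1)=0$, taking $f=T^k\,\varphi_a\circ T$. For this $f$ the Schwarz lemma on the disk $|w+1|<2$ gives $|A_k|\le 2^{-k}$, so $x\to1$ is unattainable when $k\ge1$. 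I would therefore first test the family $T^k\,\varphi_a\circ T$, and also the limiting family from the proof of Theorem \ref{thm-3.1-halfplane}, by direct computation. If neither reaches equality at $\rho^*_k$, the sharpness assertion must be re-examined: for $k=0$ the constants $f\equiv a\to1$ do give sharpness, while for $k\ge1$ the correct sharp radius may be larger than $\rho^*_k$.
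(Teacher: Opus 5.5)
\textbf{The inequality.} Your proof of the inequality is correct, and it takes a different and sounder route than the paper. Since $\{|w+1|<1\}\subset\mathbb{H}_1$, the function $F(\xi)=f(\xi-1)=\sum_{n\ge k}A_n\xi^n$ lies in $\mathcal{B}(\mathbb{D})$. Applying \eqref{eq-3.1} to $g=F/\xi^k$ gives exactly the bound $\rho^k\bigl(x+\frac{x^2}{1+x}+\frac{\rho}{1-\rho}\bigr)$ of \eqref{eq-3.2} with $\gamma=0$. For $k\ge1$, the monotonicity you postponed is immediate from $h'(\rho)=\rho^{k-1}\bigl((k+1)\rho-3k\bigr)-2<0$.

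The paper instead restricts $f$ to $\Omega_\gamma$ and invokes Theorem~\ref{thm-3.2} through the identification $A_n=a_n/(1-\gamma)^n$. That identification is not valid. The two expansions are centred at $-1$ and at $-\gamma/(1-\gamma)$, and these coincide only for $\gamma=1/2$, where one gets $a_n=A_n$, not $a_n=2^{-n}A_n$. Moreover, $f$ need not vanish at $-\gamma/(1-\gamma)$ at all. So drop your parenthetical ``alternatively'' remark: your direct argument is the one that actually proves the inequality.

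\textbf{Sharpness.} The genuine gap is sharpness, and it cannot be closed, because the claim is false for every $k\ge1$; your suspicion is right. Use the larger disk $\{|w+1|<2\}\subset\mathbb{H}_1$. Then $G(\xi)=f(2\xi-1)=\sum_{n\ge k}2^nA_n\xi^n\in\mathcal{B}(\mathbb{D})$, so $2^n|A_n|\le1$ and $\sum_n 4^n|A_n|^2\le1$. Together with $1/(1+|A_k|)\le1$, every admissible $f$ satisfies
\[
\sum_{n\ge k}|A_n|\rho^n+\Bigl(\frac{\rho^{-k}}{1+|A_k|}+\frac{\rho^{1-k}}{1-\rho}\Bigr)\|f\|_\rho^2\le B_k(\rho):=\frac{\rho^k}{2^{k-1}(2-\rho)}+\frac{\rho^k}{4^k(1-\rho)}.
\]
At $\rho=\rho_k^*$ one has $\rho^k=2(1-\rho)/(3-\rho)$, hence
\[
B_k(\rho_k^*)=\frac{2(1-\rho)}{2^{k-1}(3-\rho)(2-\rho)}+\frac{2}{4^k(3-\rho)}<\frac13+\frac14 .
\]
For $k=1$ this is about $0.48$, and even $B_1(0.65)<1$. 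By continuity of $B_k$, the inequality holds on some interval $[0,\rho_k^*+\varepsilon]$, so $\rho_k^*$ is not best possible.

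The paper's sharpness argument does not rescue the claim. Its test functions $(\gamma+(1-\gamma)z)^k$ are bounded by $1$ on $\Omega_\gamma$ but not on $\mathbb{H}_1$. They do not vanish at $-1$ (unless $\gamma=1/2$), and they tend to the constant $1$ as $\gamma\to1^-$. So they never produce members of $\mathcal{B}(\mathbb{H}_1)$ with a $k$-fold zero at $-1$ and $|A_k|$ near $1$. This is precisely the obstruction you found; in fact $|A_k|\le 4^{-k}$, since $T(w)=\frac{w+1}{4}+\cdots$.

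\textbf{The case $k=0$.} Correct your remark here. For $k=0$, \eqref{Eq-33.6} reads $-1-\rho=0$, which has no root in $(0,1)$. Moreover $f\equiv1$ gives the value $\frac32+\frac{\rho}{1-\rho}>1$ for every $\rho$, so there is no sharpness to speak of. The statement is meaningful only for $k\ge1$, where the inequality holds but the ``best possible'' assertion fails.
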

\begin{proof}[\bf Proof of Theorem \ref{thm-3.2-halfplane}]
	{The half-plane $\mathbb{H}_1 = \{ w \in \mathbb{C} : \operatorname{Re}(w) < 1 \}$ is the limit of a family of nested, shifted disks $\Omega_{\gamma}$. In deed, $\gamma \to 1^-$, the family of domains asymptotically deforms and expands to fill the entire half-plane $\mathbb{H}_1$.}  \vspace{1.2mm}
	
	Consider an analytic function $f \in \mathcal{B}(\mathbb{H}_1)$ possessing the Taylor expansion about the shifted boundary 
	\[
	f(w) = \sum_{n=k}^{\infty} A_n (w + 1)^n.
	\]
	Because $\Omega_{\gamma} \subsetneq \mathbb{H}_1$ for all $\gamma \in [0,1)$, the function $f$ restricted to any shifted disk $\Omega_{\gamma}$ belongs to $\mathcal{B}(\Omega_{\gamma})$. Thus, $f(w)$ matches the formulation of Theorem 3.3 given by 
	\[
	f(z) = \sum_{n=k}^{\infty} a_n \left( z + \frac{\gamma}{1-\gamma} \right)^n.
	\]
	Comparing the coefficients yields the identity relation  
\begin{align*}
	a_n = A_n (1-\gamma)^n\;\mbox{{if, and only if,}}\; A_n = \frac{a_n}{(1-\gamma)^n}.
\end{align*}
	Clearly, for $f \in \mathcal{B}(\Omega_{\gamma})$ yields the valid sharp majorant bound:  
	\[
	\sum_{n=k}^{\infty} \frac{|a_{n}|}{(1-\gamma)^{n}} \rho^{n} + \left( \frac{\rho^{-k}}{1+\frac{|a_k|}{(1-\gamma)^k}} + \frac{\rho^{1-k}}{1-\rho} \right) \sum_{n=k}^{\infty} \frac{|a_{n}|^{2}}{(1-\gamma)^{2n}} \rho^{2n} \le 1.
	\]
	Substituting the coefficient $A_n = {a_n}/{(1-\gamma)^n}$ into this inequality
	\[
	\sum_{n=k}^{\infty} |A_n| \rho^n + \left( \frac{\rho^{-k}}{1+|A_k|} + \frac{\rho^{1-k}}{1-\rho} \right) \|f\|_\rho^2 \le 1
	\]
	where $\|f\|_\rho^2 = \sum_{n=k}^{\infty} |A_n|^2 \rho^{2n}$. This inequality holds for all $\rho \le \rho^*_k$, where $\rho^*_k \in (0,1)$ is the unique positive root of the algebraic constraint equation
	\[
	\rho^{k+1}-3\rho^{k}-2\rho+2=0.
	\]
	The sharpness of the radius $\rho^*_k$ translates directly from the optimality verified in Theorem \ref{thm-3.2} via the asymptotic behavior of the testing function:  
	\[
	f(z) = (\gamma + (1-\gamma)z)^k = (1-\gamma)^k \left( z + \frac{\gamma}{1-\gamma} \right)^k.
	\]
	As $\gamma \to 1^-$, the geometric bound is preserved, confirming that $\rho^*_k$ is the best possible radius. {This completes the proof.}
\end{proof}
\begin{proof}[\bf Proof of Theorem \ref{thm-3.2}]
{Suppose that $f \in \mathcal{B}(\Omega_\gamma)$ and $f$ is given by \eqref{Eq-33.1}.} By following the same line of argument as in the proof of Theorem \ref{thm-3.1} and applying inequality \eqref{eq-3.1}, we obtain
	\begin{align*}
		&\sum_{n=k}^{\infty}\frac{|a_n|}{(1-\gamma)^n}\rho^n+\frac{\rho^{1-k}}{1-\rho}||f||^2_{\rho}\\&\leq\rho^k\left(|b_0|+\frac{\rho}{1-\rho}(1-|b_0|^2)-\left(\frac{1}{1+|b_0|}+\frac{\rho}{1-\rho}\right)\sum_{n=k+1}^{\infty}\frac{|a_n|^2}{(1-\gamma)^{2n}}\rho^{2n-2k}\right)\nonumber\\&\quad+\frac{\rho^{1-k}}{1-\rho}\sum_{n=k}^{\infty}\frac{|a_n|^2}{(1-\gamma)^{2n}}\rho^{2n}\\&=\rho^k\left(|b_0|+\frac{\rho}{1-\rho}-\left(\frac{1}{1+|b_0|}\right)\sum_{n=k+1}^{\infty}\frac{|a_n|^2}{(1-\gamma)^{2n}}\rho^{2n-2k}\right)\\&=|b_0|\rho^k+\frac{\rho^{k+1}}{1-\rho}-\frac{1}{1+|b_0|}\sum_{n=k+1}^{\infty}\frac{|a_n|^2}{(1-\gamma)^{2n}}\rho^{2n-k}\\&=|b_0|\rho^k+\frac{\rho^{k+1}}{1-\rho}+\frac{|b_0|^2\rho^k}{1+|b_0|}-\frac{\rho^{-k}}{1+|b_0|}\sum_{n=k}^{\infty}\frac{|a_n|^2}{(1-\gamma)^{2n}}\rho^{2n}
	\end{align*}
and thus, we have 
	\begin{align}\label{eq-3.2}
		\sum_{n=k}^{\infty}\frac{|a_n|}{(1-\gamma)^{n}}\rho^n&+\left(\frac{\rho^{-k}}{1+|b_0|}+\frac{\rho^{1-k}}{1-\rho}\right)||f||_\rho^2\leq\rho^k\left(|b_0|+\frac{\rho}{1-\rho}+\frac{|b_0|^2}{1+|b_0|}\right)\\&=1+Q_{\rho}(|b_0|),\nonumber
	\end{align}
	where
	\begin{align*}
		Q_{\rho}(x)=\rho^k\left(x+\frac{\rho}{1-\rho}+\frac{x^2}{1+x}\right)-1,\;\mbox{for}\;x\in[0,1]\;\mbox{and}\;\rho\in[0,1).
	\end{align*}
	Since
	\begin{align*}
		\frac{\partial Q_{\rho}(x)}{\partial x} = \rho^k \left( \frac{1 + 4x + 2x^2}{(1+x)^2} \right)\geq0,
	\end{align*}
	 it follows that $Q(x,\rho)$ is monotonically increasing and hence, 
	 \begin{align*}
	 	Q_{\rho}(x)\leq Q_{\rho}(1)=\rho^k\left(\frac{\rho}{1-\rho}+\frac{3}{2}\right)-1:=R_k(\rho).
	 \end{align*}
	It is {easy to see} that $R_k$ is a monotonically increasing function of $\rho$. Moreover, $R_k(0)=-1$ and $\lim_{\rho\to 1}R_k(\rho)=+\infty$. Hence, the equation $R_k(\rho)=0$ possesses a unique positive root $\rho_k^*$ in the interval $(0,1)$. Consequently, $R_k(\rho)\leq 0$ for all $\rho\leq \rho_k^*$, where $\rho_k^*$ is the unique positive root in $(0,1)$ of the equation
	\begin{align*}
		\rho^{k+1}-3\rho^k-2\rho+2=0.
	\end{align*}
	 It follows from the inequality \eqref{eq-3.2} that
	 \begin{align*}
	 	\sum_{n=k}^{\infty}\frac{|a_n|}{(1-\gamma)^{n}}\rho^n+\left(\frac{\rho^{-k}}{1+|b_0|}+\frac{\rho^{1-k}}{1-\rho}\right)||f||_\rho^2\leq 1\;\;\mbox{for}\;\; \rho\leq\rho^*_k.
	 \end{align*}
	 To establish the sharpness of the radius, we consider the extremal function 
	 \begin{align*}
	 	f(z) = (\gamma + (1-\gamma)z)^k = (1-\gamma)^k (z + \frac{\gamma}{1-\gamma})^k.
	 \end{align*}
	 A {simple} computations yields that $a_k = (1-\gamma)^k$ and $a_n = 0$ for $n \geq k+1$, which implies $|b_0| = |a_k|/(1-\gamma)^k = 1$. Consequently, the majorant series and the $H^2$-norm simplify to 
	 \[
	 \sum_{n=k}^{\infty} \frac{|a_n|}{(1-\gamma)^n} \rho^n = \rho^k \quad \text{and} \quad \|f\|_\rho^2 = \rho^{2k}.
	 \]
	 Substituting these values into the left-hand side of the inequality yields
	 \begin{align*}
	 	\sum_{n=k}^{\infty}\frac{|a_n|}{(1-\gamma)^{n}}\rho^n+\left(\frac{\rho^{-k}}{1+|b_0|}+\frac{\rho^{1-k}}{1-\rho}\right)\|f\|_\rho^2 
	 	&= \rho^k + \left(\frac{\rho^{-k}}{2}+\frac{\rho^{1-k}}{1-\rho}\right)\rho^{2k} \\
	 	&= \rho^k \left(\frac{3}{2}+\frac{\rho}{1-\rho}\right) \\
	 	&= 1 + \frac{\rho^k(3-\rho)-2(1-\rho)}{2(1-\rho)},
	 \end{align*}
	 which is strictly greater than $1$ {if, and only if,} $\rho^k(3-\rho)-2(1-\rho) > 0$, or equivalently, 
	 \[
	 \rho^{k+1}-3\rho^k-2\rho+2 < 0.
	 \]
	 Since $\rho^*_k$ is the unique root of the equation $\rho^{k+1}-3\rho^k-2\rho+2 = 0$ in $(0,1)$, the above expression is strictly greater than $1$ for all $\rho > \rho^*_k$. This confirms that the radius $\rho^*_k$ is {the} best possible and completes the proof.
\end{proof}
 \begin{figure}[htbp]
	\centering
	\includegraphics[width=0.95\textwidth]{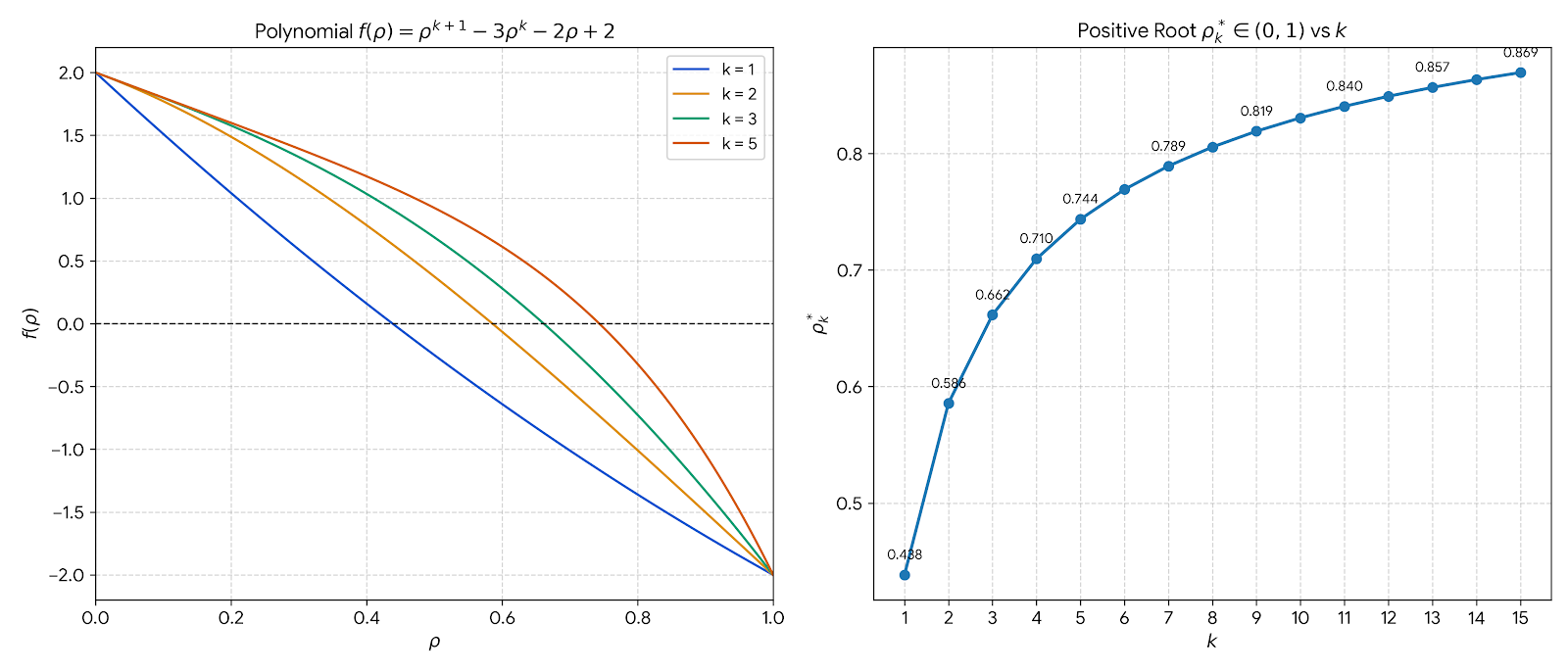}
	\caption{Analysis of the polynomial $f(\rho)$ (left) and the trajectory of its unique positive root $\rho_k^*$ across values of $k$ (right).}
	\label{fig:theorem_3_2_visuals}
\end{figure}
By utilizing the coefficient estimates established in Lemma \ref{Lem-3.1}, we provide a sharp refinement and generalization of item (c) of Theorem D by embedding the problem into the more general setting of the shifted disk $\Omega_{\gamma}$.
\begin{thm}\label{thm-3.3}
	Suppose that $f \in \mathcal{B}(\Omega_\gamma)$ and $f$ is given by \eqref{Eq-33.1}. Then 
	\begin{align*}
		\sum_{n=k}^{\infty}\frac{|a_n|}{(1-\gamma)^{n}}\rho^n+\left(\frac{r^{-k}}{1+a}+\frac{\rho^{1-k}}{1-\rho}\right)||f||_\rho^2\leq1\;\;\mbox{for}\;\;\rho\leq\rho(a),
	\end{align*}
	 where $\rho_k(a)$ is the root in $(0, 1)$ of the equation
	\begin{align}\label{Eq-33.8}
		(2a^2-1)\rho^{k+1}-a(1+2a)\rho^k-(1+a)\rho+1+a=0.
	\end{align}
	The radius $\rho_k(a)$ is {the} best possible for each $a\in(0,1)$.
\end{thm}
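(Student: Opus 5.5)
Throughout I read the statement with $a=|b_0|=|a_k|/(1-\gamma)^k\in(0,1)$ fixed, and with $r=\rho$ in the coefficient $r^{-k}/(1+a)$. This matches Theorem \ref{thm-3.2}, where the same expression appears with $1+|b_0|$. The plan is to reuse inequality \eqref{eq-3.2} from the proof of Theorem \ref{thm-3.2} verbatim, but without maximizing over $x=|b_0|$. Write $f\circ\Phi(\xi)=\xi^k g(\xi)$ with $g(\xi)=\sum b_m\xi^m$ as in the proof of Theorem \ref{thm-3.1}. Then \eqref{eq-3.2} gives
\[
\sum_{n=k}^{\infty}\frac{|a_n|}{(1-\gamma)^{n}}\rho^n+\left(\frac{\rho^{-k}}{1+a}+\frac{\rho^{1-k}}{1-\rho}\right)\|f\|_\rho^2\le \rho^k\left(a+\frac{a^2}{1+a}+\frac{\rho}{1-\rho}\right)=\rho^k\left(\frac{a+2a^2}{1+a}+\frac{\rho}{1-\rho}\right).
\]

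So it suffices to show that $T_a(\rho):=\rho^k\bigl(\tfrac{a+2a^2}{1+a}+\tfrac{\rho}{1-\rho}\bigr)\le 1$ exactly for $\rho\le\rho_k(a)$. Multiplying $T_a(\rho)=1$ by $(1+a)(1-\rho)$ gives
\[
\rho^k\bigl[(a+2a^2)(1-\rho)+(1+a)\rho\bigr]=(1+a)(1-\rho),
\]
which rearranges to $(1-2a^2)\rho^{k+1}+a(1+2a)\rho^k+(1+a)\rho-(1+a)=0$. This is \eqref{Eq-33.8} multiplied by $-1$. Next, $T_a$ is a product of $\rho^k$ with a positive function that increases in $\rho$, so $T_a$ is strictly increasing on $(0,1)$. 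It also satisfies $T_a(0)\le0<1$ (with $T_a(0)=\tfrac{a+2a^2}{1+a}<1$ when $k=0$) and $T_a(\rho)\to+\infty$ as $\rho\to1^-$. Hence there is a unique root $\rho_k(a)\in(0,1)$, and $T_a\le1$ precisely on $[0,\rho_k(a)]$. This proves the inequality.

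For sharpness I would take, as in Theorem \ref{thm-3.1}, $f=f_a\circ\phi$ with $\phi(z)=\gamma+(1-\gamma)z$ and $f_a(\xi)=\xi^k\frac{a-\xi}{1-a\xi}$. Then $b_0=a$ and $b_m=-(1-a^2)a^{m-1}$ for $m\ge1$. Let $M$ denote the majorant sum and $S=\sum_{m\ge1}|b_m|^2\rho^{2m}$, so that $\|f\|_\rho^2=\rho^{2k}(a^2+S)$. The computation in the proof of Theorem \ref{thm-3.1} gives the identity
\[
M+\Bigl(\tfrac1{1+a}+\tfrac{\rho}{1-\rho}\Bigr)\rho^kS=\rho^k\Bigl(a+\tfrac{(1-a^2)\rho}{1-\rho}\Bigr).
\]
Splitting off the $a^2$ part of the norm term, the left-hand side of the theorem for this $f$ equals
\[
\rho^k\Bigl(a+\tfrac{(1-a^2)\rho}{1-\rho}\Bigr)+\rho^k a^2\Bigl(\tfrac1{1+a}+\tfrac{\rho}{1-\rho}\Bigr)=T_a(\rho).
\]
So equality holds identically in $\rho$, and the left-hand side exceeds $1$ for every $\rho>\rho_k(a)$.

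The main obstacle is bookkeeping rather than analysis: checking that the closed-form identity for $f_a$ carries over with the $\rho^{-k}$ and $\rho^{1-k}$ weights, and confirming the sign and normalization of \eqref{Eq-33.8}. I also need to justify that $a$ is fixed, so no maximization over $x$ is required, unlike Theorem \ref{thm-3.2}, where $x=1$ was forced.
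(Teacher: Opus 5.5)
Your proposal is correct and is essentially the paper's own proof. You apply \eqref{eq-3.2} at $x=|b_0|=a$, observe that the resulting bound $T_a(\rho)$ equals $1-F_\rho(a)/\bigl((1-\rho)(1+a)\bigr)$, and obtain a unique root by monotonicity; your direct argument that $T_a$ is strictly increasing is cleaner than the paper's derivative estimate on $F_\rho$. Sharpness then comes from the same extremal $\xi^k(a-\xi)/(1-a\xi)$, for which the left-hand side equals $T_a(\rho)$ identically.

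The one slip is the case $k=0$, and the paper makes the same one through its claim $F_0(a)=1+a$, which is valid only for $k\ge 1$. When $k=0$, $T_a(0)=(a+2a^2)/(1+a)<1$ holds only for $a<1/\sqrt{2}$. For $a\ge 1/\sqrt{2}$ the equation \eqref{Eq-33.8} has no root in $(0,1)$ at all (compare the dash in Table~\ref{tab:theorem_3_3_radii}), so the statement must be restricted accordingly.
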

\begin{rem}
	By assigning $k=0$ in Theorem \ref{thm-3.3}, we deduce the following immediate corollary functions on the shifted disk $\Omega_{\gamma}$.
\end{rem}
\begin{cor}
	Suppose that $f \in \mathcal{B}(\Omega_\gamma)$ and $f$ is given by \eqref{Eq-33.1}. Then
	\begin{align*}
		\sum_{n=0}^{\infty}\frac{|a_n|}{(1-\gamma)^{n}}\rho^n+\left(\frac{1}{1+a}+\frac{\rho}{1-\rho}\right)||f||_\rho^2\leq1\;\mbox{for}\;\rho\leq\rho_0(a),
	\end{align*}
	where $\rho_0(a)$ is the unique positive root of the equation
	\begin{align*}
		(2a^2-1)\rho - a(1+2a) - (1+a)\rho + 1 + a = 0,
	\end{align*}
	which simplifies to
	\begin{align*}
		(2a^2 - a - 2)\rho + (1 - 2a^2) = 0.
	\end{align*}
	The radius $\rho_0(a) = {{(2a^2 - 1)}/{(2a^2 - a - 2)}}$ is sharp for any $a\in(0,1)$.
\end{cor}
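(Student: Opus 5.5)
The plan is to reduce the corollary to the unit disk through the conformal map $\Phi(\xi)=(\xi-\gamma)/(1-\gamma)$ from the proof of Lemma \ref{Lem-3.1}, and then apply inequality \eqref{eq-3.1}, i.e.\ the refined estimate behind Theorem C, with $k=0$.

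\textbf{Conventions.} I read the parameter $a$ in the statement as $a=|a_0|$, which equals $|b_0|$ since $(1-\gamma)^0=1$. I read the norm as $\|f\|_\rho^2=\sum_{n\ge 0}|a_n|^2\rho^{2n}/(1-\gamma)^{2n}$, which is the normalisation used in Theorem \ref{thm-3.1} and Theorem \ref{thm-3.2}.

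\textbf{Reduction to the disk.} Put $F=f\circ\Phi=\sum_{n\ge0}b_n\xi^n$ with $b_n=a_n/(1-\gamma)^n$. Then $F\in\mathcal{B}(\mathbb{D})$. The majorant on the left of the corollary is $\sum|b_n|\rho^n$, and $\|f\|_\rho^2=a^2+\|F_0\|_\rho^2$, where $F_0=F-b_0$.

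\textbf{Main estimate.} Inequality \eqref{eq-3.1} gives
\[
\sum_{n\ge0}|b_n|\rho^n\le a+\frac{\rho}{1-\rho}(1-a^2)-\Big(\frac{1}{1+a}+\frac{\rho}{1-\rho}\Big)\|F_0\|_\rho^2 .
\]
Add $\big(\frac{1}{1+a}+\frac{\rho}{1-\rho}\big)(a^2+\|F_0\|_\rho^2)$ to both sides. The $\|F_0\|^2_\rho$ terms cancel, and the $\frac{\rho}{1-\rho}a^2$ terms cancel, so the left side of the corollary is at most
\[
a+\frac{\rho}{1-\rho}+\frac{a^2}{1+a}.
\]
Multiplying the condition "this is $\le 1$" by $(1-\rho)(1+a)>0$ and expanding gives
\[
(2a^2-1)+\rho\,(2+a-2a^2)\le 0 .
\]
This is exactly $(2a^2-a-2)\rho+(1-2a^2)\ge0$. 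Since $2a^2-a-2<0$ on $(0,1)$, it is equivalent to
\[
\rho\le \rho_0(a)=\frac{2a^2-1}{2a^2-a-2}=\frac{1-2a^2}{2+a-2a^2}.
\]

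\textbf{Range of $a$.} I expect the main subtlety here rather than in the algebra. The root $\rho_0(a)$ lies in $(0,1)$ precisely when $a<1/\sqrt2$. For $a\ge 1/\sqrt2$, even $\rho=0$ fails, because $a+a^2/(1+a)\ge1$. So the proof should record that the statement is meaningful for $a\in(0,1/\sqrt2)$, consistent with the remark that $r(1/\sqrt2)=1/2$ in Theorem D.

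\textbf{Sharpness.} Take $f=\varphi_a\circ\phi$ with $\phi(z)=\gamma+(1-\gamma)z$ and $\varphi_a(\xi)=(a-\xi)/(1-a\xi)$. This is the $k=0$ case of the test function in Theorem \ref{thm-3.1}, and it gives $b_0=a$, $b_n=-(1-a^2)a^{n-1}$ for $n\ge1$. The computation already done there, with $k=0$, yields
\[
\sum|b_n|\rho^n+\Big(\frac1{1+a}+\frac{\rho}{1-\rho}\Big)\|F_0\|_\rho^2=a+\frac{(1-a^2)\rho}{1-\rho}.
\]
Adding the term $\big(\frac1{1+a}+\frac{\rho}{1-\rho}\big)a^2$ shows that the left side of the corollary equals $a+\frac{\rho}{1-\rho}+\frac{a^2}{1+a}$ exactly. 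So the chain above is an equality for this function. Hence the left side exceeds $1$ for every $\rho>\rho_0(a)$, and $\rho_0(a)$ cannot be improved.
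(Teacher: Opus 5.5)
Your proposal is correct and follows the paper's proof of Theorem \ref{thm-3.3} specialised to $k=0$: you transfer to $\mathbb{D}$ via $\Phi$, add the $H^2$-terms to \eqref{eq-3.1} to reach the bound $a+\frac{\rho}{1-\rho}+\frac{a^2}{1+a}$ (the $k=0$ case of \eqref{eq-3.2}), solve the resulting linear condition in $\rho$, and obtain sharpness from the $k=0$ case of the extremal function of Theorem \ref{thm-3.1}, which attains this bound. Your remark that $\rho_0(a)\in(0,1)$ only when $a<1/\sqrt{2}$ is also right and corrects an oversight in the paper, whose proof asserts $F_0(a)=1+a>0$ although for $k=0$ one has $F_0(a)=1-2a^2$ (cf.\ the ``---'' entry in Table \ref{tab:theorem_3_3_radii}); the only nitpick is that at $a=1/\sqrt{2}$ the inequality still holds at $\rho=0$, with equality, and fails for every $\rho>0$.
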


By choosing $k = 1$ in Theorem \ref{thm-3.3}, we deduce the following corollary on shifted disk $\Omega_{\gamma}$.

\begin{cor}\label{cor-3.5}Suppose that $f \in \mathcal{B}(\Omega_\gamma)$ and $f(z) = \sum_{n=1}^{\infty} a_n \left(z+\frac{\gamma}{1-\gamma}\right)^n$. Then\begin{align*}\sum_{n=1}^{\infty}\frac{|a_n|}{(1-\gamma)^{n}}\rho^n+\left(\frac{r^{-1}}{1+a}+\frac{1}{1-\rho}\right)||f||_\rho^2\leq1\;\mbox{for}\;\rho\leq\rho_1(a),\end{align*}where $\rho_1(a)$ is the positive root of the equation\begin{align*}(2a^2-1)\rho^{2}-a(1+2a)\rho-(1+a)\rho+1+a=0,\end{align*}which simplifies to
	\begin{align*}
		(2a^2-1)\rho^{2}-(2a^2+2a+1)\rho+1+a=0.
	\end{align*}
	The radius $\rho_1(a)$ is sharp for any $a\in(0,1)$.
	\end{cor}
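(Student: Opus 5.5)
The plan is to run the argument of Theorem \ref{thm-3.2} with $k=1$, keeping $a:=|b_0|=|a_1|/(1-\gamma)$ fixed instead of bounding it by $1$. We read the parameter $a$ in the statement as this quantity, and read $r^{-1}$ there as $\rho^{-1}$.

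\textbf{Step 1 (transfer to the disk).} Compose $f$ with $\Phi(\xi)=(\xi-\gamma)/(1-\gamma)$, exactly as in the proof of Lemma \ref{Lem-3.1}. This gives $(f\circ\Phi)(\xi)=\xi g(\xi)$ with $g=\sum_{m\ge 0} b_m\xi^m$, $b_m=a_{m+1}/(1-\gamma)^{m+1}$, and $g\in\mathcal{B}(\mathbb{D})$ by Schwarz's lemma. Then apply inequality \eqref{eq-3.1} to $g$. Add $\frac{1}{1-\rho}\|f\|_\rho^2$ to both sides and rewrite the $H^2$ term using $\|f\|_\rho^2=a^2\rho^2+\sum_{n\ge2}\frac{|a_n|^2}{(1-\gamma)^{2n}}\rho^{2n}$. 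This is the same algebra that produced \eqref{eq-3.2}; with $k=1$ it yields
\begin{align*}
\sum_{n=1}^{\infty}\frac{|a_n|}{(1-\gamma)^{n}}\rho^n+\left(\frac{\rho^{-1}}{1+a}+\frac{1}{1-\rho}\right)\|f\|_\rho^2\le \rho\left(a+\frac{\rho}{1-\rho}+\frac{a^2}{1+a}\right).
\end{align*}

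\textbf{Step 2 (reduce to a quadratic).} Multiply the condition $\rho\bigl(a+\frac{\rho}{1-\rho}+\frac{a^2}{1+a}\bigr)\le 1$ by $(1-\rho)(1+a)>0$. A direct expansion shows it is equivalent to $q_a(\rho)\ge 0$, where
\[
q_a(\rho)=(2a^2-1)\rho^2-(2a^2+2a+1)\rho+1+a .
\]
This is precisely the quadratic in the statement. We have $q_a(0)=1+a>0$ and $q_a(1)=-(1+a)<0$. A quadratic with values of opposite sign at $0$ and $1$ has exactly one zero $\rho_1(a)$ in $(0,1)$. The right-hand side of Step 1 is increasing in $\rho$, so the bound is $\le 1$ precisely for $\rho\le\rho_1(a)$, and the inequality follows.

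\textbf{Step 3 (sharpness).} Take $f=f_a\circ\phi$ with $\phi(z)=\gamma+(1-\gamma)z$ and $f_a(\xi)=\xi\,\frac{a-\xi}{1-a\xi}$, the $k=1$ test function from Theorem \ref{thm-3.1}. Then $g=\varphi_a$, and the computation in the proof of Theorem \ref{thm-3.1} shows that \eqref{eq-3.1} is an equality for $g=\varphi_a$: both sides equal $a+(1-a^2)\rho/(1-\rho)$. Every later step in Step 1 is an algebraic identity, so for this $f$ the left-hand side equals $\rho\bigl(a+\frac{\rho}{1-\rho}+\frac{a^2}{1+a}\bigr)$ exactly. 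By Step 2 this exceeds $1$ for every $\rho>\rho_1(a)$, so $\rho_1(a)$ cannot be improved.

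\textbf{Main obstacle.} The delicate part is the bookkeeping in Step 1. The weight $\rho^{-1}/(1+a)$ must exactly absorb the term $-\frac{1}{1+|b_0|}\sum_{n\ge2}\frac{|a_n|^2}{(1-\gamma)^{2n}}\rho^{2n-1}$ coming from \eqref{eq-3.1}, leaving the $a^2/(1+a)$ remainder. One must also confirm that equality is genuinely attained for $\varphi_a$ rather than just asymptotically. I would check the case $\gamma=0$ against Theorem D(c) as a consistency test.
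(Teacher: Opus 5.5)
Your proposal is correct and follows essentially the paper's route. You take the $k=1$ case of \eqref{eq-3.2}, obtained from \eqref{eq-3.1} applied to $g$; you clear denominators by $(1-\rho)(1+a)$ to get the polynomial $F_\rho(a)$; and you use the same extremal function $f_a\circ\phi$, for which \eqref{eq-3.1} holds with exact equality. The only difference is that you obtain the unique root in $(0,1)$ from the obvious monotonicity of $\rho\bigl(a+\frac{\rho}{1-\rho}+\frac{a^2}{1+a}\bigr)$, together with the sign change of the quadratic, instead of the paper's sign analysis of $\partial F_\rho/\partial\rho$; this is, if anything, cleaner.
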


By setting $k=0$ and $\gamma=0$, we deduce a corollary which generalizes \cite[Theorem 3.3]{Ponnusamy-Vijayakumar-Wirths-RM-2020} (\textit{i.e.,} Theorem D) on unit disk $\mathbb{D}$.

\begin{cor}\label{cor-3.6}
	Suppose that $f \in \mathcal{B}(\mathbb{D})$ and $f(z) = \sum_{n=0}^{\infty} a_n z^n$. Then
	\begin{align*}
		\sum_{n=0}^{\infty}|a_n|\rho^n+\left(\frac{1}{1+a}+\frac{\rho}{1-\rho}\right)||f||_\rho^2\leq1\;\mbox{for}\;\rho\leq\rho_0(a),
	\end{align*}
	where \begin{align*}
		\rho_0(a) = \frac{2a^2 - 1}{2a^2 - a - 2}
	\end{align*} is the unique  root in $(0, 1)$ of the equation
	\begin{align*}
		(2a^2 - a - 2)\rho + (1 - 2a^2) = 0.
	\end{align*}
	The radius $\rho_0(a)$ is sharp for any $a\in(0,1)$.
\end{cor}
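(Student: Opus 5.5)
This is the case $k=0$, $\gamma=0$ of Theorem~\ref{thm-3.3}. Here $\Omega_0=\mathbb{D}$, $\Phi$ is the identity, $b_n=a_n$, and I read the parameter $a$ as $|a_0|$. Rather than just quote Theorem~\ref{thm-3.3}, I would redo the short computation directly, since it makes the radius and its sharpness transparent.

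\textbf{Step 1: the starting estimate.} Write $f_0=f-a_0$ and $a=|a_0|$. Inequality \eqref{eq-3.1} with $g=f$ gives
\[
\sum_{n=0}^\infty|a_n|\rho^n\le a+\frac{\rho}{1-\rho}(1-a^2)-\Big(\frac{1}{1+a}+\frac{\rho}{1-\rho}\Big)\|f_0\|_\rho^2 .
\]

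\textbf{Step 2: add the norm term.} Use $\|f\|_\rho^2=a^2+\|f_0\|_\rho^2$ and add $\big(\frac{1}{1+a}+\frac{\rho}{1-\rho}\big)\|f\|_\rho^2$ to both sides. The $\|f_0\|_\rho^2$ terms cancel, and the left-hand side of the corollary is bounded by
\[
a+\frac{a^2}{1+a}+\frac{\rho}{1-\rho}.
\]

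\textbf{Step 3: solve for $\rho$.} This bound is at most $1$ if and only if
\[
\frac{\rho}{1-\rho}\le 1-a-\frac{a^2}{1+a}=\frac{1-2a^2}{1+a},
\]
that is,
\[
\rho\le \frac{1-2a^2}{2+a-2a^2}=\frac{2a^2-1}{2a^2-a-2}=\rho_0(a).
\]
This is exactly the root of $(2a^2-a-2)\rho+(1-2a^2)=0$.

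One caveat must be flagged: $\rho_0(a)\in(0,1)$ only when $a<1/\sqrt2$. For $a\ge 1/\sqrt2$ the admissible range is empty, or just $\rho=0$. The statement ``for any $a\in(0,1)$'' should therefore be read with $a\in(0,1/\sqrt2)$, which matches Theorem~D(c), where the radius $1/2$ corresponds to $a=1/\sqrt2$.

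\textbf{Step 4: sharpness.} Use the disk automorphism $\varphi_a(z)=(a-z)/(1-az)$, so that $|a_0|=a$ and the parameter is matched. The computation in the proof of Theorem~\ref{thm-3.1}, specialized to $k=0$, shows that for $\varphi_a$
\[
\sum|a_n|\rho^n+\Big(\frac{1}{1+a}+\frac{\rho}{1-\rho}\Big)\|f_0\|_\rho^2=a+\frac{(1-a^2)\rho}{1-\rho}.
\]
So equality holds in Step 1. Adding the $a^2$ part of $\|f\|_\rho^2$ then shows that the Step~2 bound $a+\frac{a^2}{1+a}+\frac{\rho}{1-\rho}$ is attained. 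This exceeds $1$ exactly when $\rho>\rho_0(a)$, so the radius cannot be improved.

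The only delicate point I anticipate is this exact equality for $\varphi_a$, which rests on the telescoping identity already used in Theorem~\ref{thm-3.1}. Apart from that, the argument is routine algebra.
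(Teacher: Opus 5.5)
Your proof is correct and follows the paper's own route: the corollary is the case $k=0$, $\gamma=0$ of Theorem~\ref{thm-3.3}, whose proof runs through \eqref{eq-3.1}, then the bound \eqref{eq-3.2} (here $a+\frac{a^2}{1+a}+\frac{\rho}{1-\rho}$), then the sign condition $F_\rho(a)\ge 0$ (linear in $\rho$ when $k=0$), with sharpness from the same extremal $\varphi_a$, for which \eqref{eq-3.1} holds with equality. Your caveat is also right and corrects the paper: for $k=0$ one has $F_0(a)=1-2a^2$, not the value $1+a$ claimed in the proof of Theorem~\ref{thm-3.3}, so $\rho_0(a)\in(0,1)$ only for $a<1/\sqrt{2}$, and for $a>1/\sqrt{2}$ even the constant $f\equiv a$ violates the inequality at every $\rho$ (the comparison with Theorem~D(c) does not fit, though, since that is the $k=1$ case and holds for all $a\in[0,1)$).
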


By setting $k=1$ and $\gamma=0$, we deduce a corollary which generalizes \cite[Theorem 3.3]{Ponnusamy-Vijayakumar-Wirths-RM-2020} on unit disk $\mathbb{D}$.

\begin{cor}\label{cor-3.7}
	Suppose that $f \in \mathcal{B}(\mathbb{D})$ and $f(z) = \sum_{n=1}^{\infty} a_n z^n$. Then
	\begin{align*}
		\sum_{n=1}^{\infty}|a_n|\rho^n+\left(\frac{r^{-1}}{1+a}+\frac{1}{1-\rho}\right)||f||_\rho^2\leq1\;\mbox{for}\;\rho\leq\rho_1(a),
	\end{align*}
	where 
	\begin{align*}
		\rho_1(a) = \frac{2a^2+2a+1 - \sqrt{4a^4+8a+5}}{2(2a^2-1)}
	\end{align*} is the unique root in $(0, 1)$ of the equation
	\begin{align*}
		(2a^2-1)\rho^{2}-(2a^2+2a+1)\rho+1+a=0.
	\end{align*}
	The radius $\rho_1(a)$ is sharp for any $a\in(0,1)$.
\end{cor}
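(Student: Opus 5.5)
The plan is to derive Corollary \ref{cor-3.7} directly by specializing the argument of Theorem \ref{thm-3.2} to $k=1$, $\gamma=0$. The difference is that the constant coefficient $|b_0|=|a_1|$ is now held fixed at the prescribed value $a$ rather than maximized over $[0,1]$. Throughout I read the weight as $\rho^{-1}/(1+a)$ with $a=|a_1|$; the symbol $r$ in the statement is a typo for $\rho$.

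\textbf{Step 1: reduction to a function of $\rho$.} Write $f(z)=z g(z)$ with $g(z)=\sum_{m\ge0}a_{m+1}z^m$. By Schwarz's lemma $g\in\mathcal{B}(\mathbb{D})$ and $b_0=a_1$. Apply inequality \eqref{eq-3.1} to $g$, multiply by $\rho$, and add $\frac{1}{1-\rho}\|f\|_\rho^2$. Then rewrite $\sum_{n\ge2}|a_n|^2\rho^{2n-1}=\rho^{-1}\big(\|f\|_\rho^2-a^2\rho^2\big)$, exactly as in the chain of equalities leading to \eqref{eq-3.2}. This gives
\begin{align*}
\sum_{n=1}^{\infty}|a_n|\rho^n+\left(\frac{\rho^{-1}}{1+a}+\frac{1}{1-\rho}\right)\|f\|_\rho^2\le h_a(\rho):=\rho\left(a+\frac{\rho}{1-\rho}+\frac{a^2}{1+a}\right).
\end{align*}

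\textbf{Step 2: identifying the radius.} The function $h_a$ is strictly increasing on $[0,1)$, with $h_a(0)=0$ and $h_a(\rho)\to\infty$ as $\rho\to1^-$. So $h_a(\rho)\le1$ exactly for $\rho$ up to the unique solution of $h_a(\rho)=1$. Clearing denominators by multiplying by $(1+a)(1-\rho)$ gives
\begin{align*}
(2a^2+a)\rho(1-\rho)+(1+a)\rho^2=(1+a)(1-\rho),
\end{align*}
which rearranges to $(2a^2-1)\rho^2-(2a^2+2a+1)\rho+1+a=0$.

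Call this quadratic $Q$. Then $Q(0)=1+a>0$ and $Q(1)=-(1+a)<0$, so $Q$ has exactly one root in $(0,1)$. For $2a^2\neq1$ the quadratic formula identifies this root as the one with the minus sign, which is $\rho_1(a)$. The $\rho_1(a)$ expression is singular at $a=1/\sqrt2$; there the equation is linear and the value is obtained as the limit, giving $\rho=1/2$. This is consistent with the Remark after Theorem D.

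\textbf{Step 3: sharpness.} Take $f(z)=z\,\frac{a-z}{1-az}$ with $a\in(0,1)$, so that $g$ is the disk automorphism $\varphi_a$. The key point is that \eqref{eq-3.1} becomes an equality for $g=\varphi_a$. This is the same computation as in the sharpness part of Theorem \ref{thm-3.1}, where the majorant plus the weighted tail collapsed to $a+(1-a^2)\rho/(1-\rho)$.

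Tracing the identities of Step 1 with equality throughout, the left-hand side equals $h_a(\rho)$ exactly. Hence it exceeds $1$ for every $\rho>\rho_1(a)$.

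The only delicate points are the following.
\begin{itemize}
\item One must verify that equality in \eqref{eq-3.1} indeed holds for $\varphi_a$. I would confirm this by summing the two geometric series explicitly.
\item One must choose the correct root of $Q$ when the leading coefficient $2a^2-1$ changes sign. The sign argument $Q(0)>0>Q(1)$ handles this uniformly in $a$.
\end{itemize}
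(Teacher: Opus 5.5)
Your proof is correct and follows the paper's route. The paper obtains Corollary \ref{cor-3.7} by specializing Theorem \ref{thm-3.3} to $k=1$, $\gamma=0$; that proof uses exactly your Step 1 bound \eqref{eq-3.2} with $a=|b_0|=|a_1|$, the equivalence $h_a(\rho)\le 1 \iff F_\rho(a)\ge 0$, and the same extremal function $z(a-z)/(1-az)$ for sharpness. Your differences are small improvements: you read monotonicity directly from $h_a(\rho)=\rho\,(a+\frac{a^2}{1+a})+\frac{\rho^2}{1-\rho}$ instead of from the sign of $\partial F_\rho/\partial\rho$, and you justify the root choice and the removable singularity at $a=1/\sqrt2$, which the paper leaves implicit.
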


\begin{proof}[\bf Proof of Theorem \ref{thm-3.3}]
	{Suppose that $f \in \mathcal{B}(\Omega_\gamma)$ and $f$ is given by \eqref{Eq-33.1}.} By following the same line of argument as in the proof of Theorem \ref{thm-3.1}, together with inequality \eqref{eq-3.2}, we obtain
\begin{align}\label{Eq-3.3}
	\sum_{n=k}^{\infty}\frac{|a_n|}{(1-\gamma)^{n}}\rho^n+\left(\frac{\rho^{-k}}{1+|b_0|}+\frac{\rho^{1-k}}{1-\rho}\right)||f||_\rho^2&\leq\rho^k\left(|b_0|+\frac{\rho}{1-\rho}+\frac{|b_0|^2}{1+|b_0|}\right)\\&=1-\frac{F_{\rho}(|b_0|)}{(1-\rho)(1+|b_0|)}\nonumber,
\end{align}
where 
\begin{align*}
	F_{\rho}(a)=(2a^2-1)\rho^{k+1}-a(1+2a)\rho^k-(1+a)\rho+1+a.
\end{align*}
It follows that
\begin{align}\label{eq-3.3}
	\frac{\partial F_{\rho}}{\partial \rho}&=(k+1)(2a^2-1)\rho^k-ak(1+2a)\rho^{k-1}-(1+a)=\rho^{k-1}H_{\rho}(a)\\&\quad-(1+a),\nonumber
\end{align}
where 
\begin{align*}
	H_{\rho}(a)=(k+1)(2a^2-1)\rho-ak(1+2a).
\end{align*}
For $a\in(0,1/\sqrt{2}]$, we readily observe that $H_{\rho}(a)<0$. Consequently, inequality \eqref{eq-3.3} implies that $\frac{\partial F_{\rho}}{\partial\rho}<0$.\vspace{1.5mm}

For $a\in(1/\sqrt{2},1)$, since $0<\rho<1$, we have
\begin{align*}
	H_{\rho}(a)\leq& (k+1)(2a^2-1)-ak(1+2a)\\=&2a^2-1-k(1+a)\\\leq&2a^2-a-2\\=&2\left(a-\frac{1}{4}\right)^2-\frac{17}{8}<0.
\end{align*}
\begin{table}[htbp]
	\centering
	\small
	\begin{tabular}{|c|c|c|c|c|}
		\hline
		$k$ & $a = 0.2$ & $a = 0.4$ & $a = 0.6$ & $a = 0.8$ \\
		\hline
		0  & 0.4340 & 0.3269 & 0.1489 & ---    \\ \hline
		1  & 0.5925 & 0.5598 & 0.5218 & 0.4806 \\ \hline
		2  & 0.6677 & 0.6497 & 0.6295 & 0.6079 \\ \hline
		3  & 0.7147 & 0.7029 & 0.6897 & 0.6758 \\ \hline
		4  & 0.7478 & 0.7392 & 0.7298 & 0.7198 \\ \hline
		5  & 0.7726 & 0.7661 & 0.7589 & 0.7513 \\ \hline
		6  & 0.7922 & 0.7870 & 0.7813 & 0.7753 \\ \hline
		7  & 0.8081 & 0.8038 & 0.7992 & 0.7942 \\ \hline
		8  & 0.8213 & 0.8177 & 0.8138 & 0.8097 \\ \hline
		9  & 0.8325 & 0.8295 & 0.8262 & 0.8227 \\ \hline
		10 & 0.8422 & 0.8396 & 0.8367 & 0.8337 \\
		\hline
	\end{tabular}\vspace{1.2mm}
	
	\caption{{The sharp radii $\rho_k(a)$ of Theorem \ref{thm-3.3} are shown in this table for various values of $k$ and $a$.}}
	\label{tab:theorem_3_3_radii}
\end{table}
Hence, inequality \eqref{eq-3.3} implies that $\frac{\partial F_{\rho}}{\partial \rho}<0$ for $a\in(1/\sqrt{2},1)$. Thus, $H_{\rho}$ is monotonically decreasing function of $\rho$.\vspace{1.2mm}

Moreover, we have $F_{0}(a)=1+a>0$ and $\lim_{\rho\to 1}F_{\rho}(a)=-(1+a)<0$. Hence, the equation $F_{\rho}(a)=0$ possesses a unique positive root $\rho(a)$ in the interval $(0,1)$. Consequently, we have $F_{\rho}(a)\geq 0$ for $\rho\leq \rho_k(a)$. It follows from the inequality \eqref{Eq-3.3} that 
\begin{align*}
	\sum_{n=k}^{\infty}\frac{|a_n|}{(1-\gamma)^{n}}\rho^n+\left(\frac{\rho^{-k}}{1+|b_0|}+\frac{\rho^{1-k}}{1-\rho}\right)||f||_\rho^2\leq 1\;\;\mbox{for}\;\; \rho\leq\rho_k(a),
\end{align*}
where $\rho_k(a)$ is the unique positive root of the equation 
\begin{align*}
(2a^2-1)\rho^{k+1}-a(1+2a)\rho^k-(1+a)\rho+1+a=0.	
\end{align*}
For the sharpness of the result, by considering the same function used in the proof of the sharpness part of Theorem \ref{thm-3.1}, we obtain
\begin{align*}
	&\sum_{n=k}^{\infty}\frac{|C_n|}{(1-\gamma)^{n}}\rho^n+\left(\frac{\rho^{-k}}{1+\frac{|C_k|}{(1-\gamma)^k}}+\frac{\rho^{1-k}}{1-\rho}\right)||f||_\rho^2\\&=a\rho^k+\frac{(1-a^2)\rho^{k+1}}{1-a\rho}+\left(\frac{\rho^{-k}}{1+a}+\frac{\rho^{1-k}}{1-\rho}\right)\left(a^2\rho^{2k}+\frac{(1-a^2)^2\rho^{2k+2}}{1-a^2\rho^2}\right)\\&=a\rho^k+\frac{(1-a^2)\rho^{k+1}}{1-\rho}+\frac{a^2\rho^k(1+a\rho)}{(1+a)(1-\rho)}\\&=\rho^k\left(a+\frac{\rho}{1-\rho}+\frac{a^2}{1+a}\right)\\&=1-\frac{F_{\rho}(a)}{(1-\rho)(1+a)},
\end{align*}
which is greater than $1$ when $\rho>\rho_k(a)$. This completes the proof.
\end{proof}
Under the limiting behavior of the parameter $\gamma$, we bridge the gap between bounded circular boundaries and vertical line boundaries. This geometric transition yields a generalized, refined Bohr-type inequality embedded with the $H^2$-norm optimization, establishing a precise radius whose sharpness is verified by analytic extremals for arbitrary zero multiplicities.
\begin{thm}\label{thm-3.3-halfplane}
	Let $\mathbb{H}_1 = \{ w \in \mathbb{C} : \operatorname{Re}(w) < 1 \}$ and assume that $f \in \mathcal{B}(\mathbb{H}_1)$ is represented by the series expansion \eqref{Eq-33.4}. Then, for any fixed parameters $a \in (0,1)$ and $r \in (0,1)$, the following inequality holds:
	\begin{align*}
		\sum_{n=k}^{\infty} |A_n| \rho^n + \left( \frac{r^{-k}}{1 + a} + \frac{\rho^{1-k}}{1 - \rho} \right) \|f\|_\rho^2 \le 1 \quad \text{for } \rho \le \rho_k(a),
	\end{align*}
	where $\|f\|_\rho^2 = \sum_{n=k}^{\infty} |A_n|^2 \rho^{2n}$, and $\rho_k(a)$ is the unique positive root in $(0,1)$ of the algebraic equation \eqref{Eq-33.8}. The radius $\rho_k(a)$ is {the} best possible for each $a \in (0,1)$.
\end{thm}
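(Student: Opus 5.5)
Following the pattern of the proofs of Theorem \ref{thm-3.1-halfplane} and Theorem \ref{thm-3.2-halfplane}, I would reduce the half-plane statement to Theorem \ref{thm-3.3} by exhausting $\mathbb{H}_1$ with the shifted disks. Here the parameter $a$ is read as $|A_k|$ and $r$ as $\rho$, so the coefficient reads $\rho^{-k}/(1+|A_k|)$, as in the proof of Theorem \ref{thm-3.3}.

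The direct route is the map $\phi(w)=(w+1)/2$, which sends $\mathbb{H}_1$ into the right-bounded region and shifts the expansion centre $w=-1$ to $0$. More precisely, I would apply Theorem \ref{thm-3.3} on $\Omega_{1/2}$, the disk $|w+1|<2$. This disk is contained in $\mathbb{H}_1$, so $f|_{\Omega_\gamma}\in\mathcal{B}(\Omega_\gamma)$ for $\gamma=1/2$. In fact, for every $\gamma$ with $\gamma/(1-\gamma)=1$, the expansion of $f$ about $w=-1$ is exactly \eqref{Eq-33.1} with $a_n=A_n(1-\gamma)^n$.

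More generally, I would let $\gamma\to1^-$ but re-expand about the moving centre. Since $f\in\mathcal{B}(\mathbb{H}_1)$ and $\Omega_\gamma\subset\mathbb{H}_1$, the function $f\circ\Phi$ is a self-map of $\mathbb{D}$ for each $\gamma$. I would then follow the chain from \eqref{eq-3.1} to \eqref{eq-3.2} with $b_n=A_n$. Conveniently, for the centre $-1$ one has $b_n=a_n/(1-\gamma)^n=A_n$, which is independent of $\gamma$. This yields the bound
\[
\sum_{n\ge k}|A_n|\rho^n+\Big(\tfrac{\rho^{-k}}{1+|A_k|}+\tfrac{\rho^{1-k}}{1-\rho}\Big)\|f\|_\rho^2\le 1-\tfrac{F_\rho(|A_k|)}{(1-\rho)(1+|A_k|)}.
\]
The analysis of $F_\rho$ in the proof of Theorem \ref{thm-3.3} (showing $\partial F_\rho/\partial\rho<0$, $F_0>0$, and $F_1<0$) then gives a unique root $\rho_k(a)$, and the right-hand side is at most $1$ whenever $\rho\le\rho_k(a)$.

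For sharpness I would use the extremal from Theorem \ref{thm-3.1}, namely $f=f_a\circ\phi$ with $f_a(z)=z^k(a-z)/(1-az)$, transplanted so that $A_k=a$ and $A_n=-a^{n-k-1}(1-a^2)$. The computation at the end of the proof of Theorem \ref{thm-3.3} shows the left side equals exactly $1-F_\rho(a)/((1-\rho)(1+a))$, which exceeds $1$ for $\rho>\rho_k(a)$.

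The main obstacle is the legitimacy of this extremal on $\mathbb{H}_1$ itself. The function must be bounded by $1$ on the whole half-plane and not merely on a disk, and $z\mapsto z^k(a-z)/(1-az)$ composed with a map sending $\mathbb{H}_1$ into $\mathbb{D}$ and $-1$ to $0$ changes the coefficients. I would first try $\phi(w)=(w+1)/(3-w)$, which maps $\mathbb{H}_1$ onto $\mathbb{D}$ with $\phi(-1)=0$. Then I would check whether the coefficient scaling still reproduces $F_\rho$, or else argue via a limit in $\gamma$ as the paper does.
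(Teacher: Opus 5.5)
Your reduction is the paper's own route for this theorem, and it inherits the paper's error in the coefficient identification. The disk $\Omega_\gamma$ is centred at $-\gamma/(1-\gamma)$, and this equals $-1$ only for $\gamma=1/2$. There \eqref{Eq-33.1} reads $\sum a_n(z+1)^n$, so $a_n=A_n$ (not $A_n(1-\gamma)^n$), and $f\circ\Phi(\xi)=f(2\xi-1)=\sum 2^nA_n\xi^n$, so $b_n=2^nA_n$, not $A_n$. For $\gamma\neq 1/2$ the two expansions have different centres, so there is no coefficient relation at all, and no ``$\gamma\to1^-$ with $b_n=A_n$ independent of $\gamma$''.

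Your displayed bound is nonetheless true, for a different reason. Since $\{|w+1|<1\}\subset\mathbb{H}_1$, the map $\xi\mapsto f(\xi-1)=\sum_{n\ge k}A_n\xi^n$ is a self-map of $\mathbb{D}$ with a zero of order $k$. The chain \eqref{eq-3.1}--\eqref{eq-3.2} then applies to it directly with $a=|A_k|$ and $r=\rho$. The reading $r=\rho$ is forced: for $k\ge1$, a free $r\to0^+$ destroys the inequality. Also, for $k=0$ one has $F_0(a)=1-2a^2$, not $1+a$, so a root in $(0,1)$ exists only when $a<1/\sqrt2$.

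The sharpness step is a genuine gap, and it cannot be closed, because the claim is false on $\mathbb{H}_1$. Your transplanted extremal $(w+1)^k\frac{a-(w+1)}{1-a(w+1)}$ is not in $\mathcal{B}(\mathbb{H}_1)$. It is unbounded for $k\ge1$, and it has a pole at $w=1/a-1\in\mathbb{H}_1$ when $a>1/2$. Neither $(w+1)/(3-w)$ nor a limit in $\gamma$ repairs this.

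Every $f\in\mathcal{B}(\mathbb{H}_1)$ is bounded by $1$ on the larger disk $|w+1|<2$, so $\sum 2^nA_n\xi^n=\xi^k g(\xi)$ with $g=\sum c_m\xi^m\in\mathcal{B}(\mathbb{D})$. For $k\ge1$ one even has $|A_k|\le 4^{-k}$ via $(w+1)/(3-w)$, so most values of $a$ are never attained. Take $k=1$ and $s=\rho/2$. Bohr's theorem for $g$ and $\sum|c_m|^2\le1$ give, for every $\rho\le 2/3$,
\[
\sum_{n\ge1}|A_n|\rho^n+\left(\frac{\rho^{-1}}{1+|A_1|}+\frac{1}{1-\rho}\right)\|f\|_\rho^2\le s+s^2\left(\frac{1}{2s}+\frac{1}{1-2s}\right)=\frac{3s}{2}+\frac{s^2}{1-2s}\le\frac56 .
\]
On the other hand, for $k=1$ one computes $F_{2/3}(a)=-(4a^2+3a+1)/9<0$, so $\rho_1(a)<2/3$ for every $a\in(0,1)$. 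Hence $\rho_1(a)$ is not best possible.

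The paper's ``$\gamma\to1^-$'' sharpness argument fails for the same reason: its test functions are bounded only on $\Omega_\gamma$ and are expanded about $-\gamma/(1-\gamma)$ rather than $-1$. Once you replace the identification by restriction to $|w+1|<1$, your argument proves the inequality. Sharpness holds only for the disk version (Theorem \ref{thm-3.3} in the $b_n$-normalisation), not in the $A_n$-normalisation on $\mathbb{H}_1$.
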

\begin{proof}[\bf Proof of Theorem \ref{thm-3.3-halfplane}]
	The right-bounded half-plane $\mathbb{H}_1 = \{ w \in \mathbb{C} : \operatorname{Re}(w) < 1 \}$ is the limiting domain of the family of nested shifted disks  $\Omega_{\gamma}$ as $\gamma \to 1^-$. Since $\Omega_{\gamma} \subsetneq \mathbb{H}_1$ for all $\gamma \in [0,1)$, any $f \in \mathcal{B}(\mathbb{H}_1)$ restricted to $\Omega_{\gamma}$ belongs to $\mathcal{B}(\Omega_{\gamma})$. Matching the Taylor expansion $f(w) = \sum_{n=k}^{\infty} A_n (w + 1)^n$ with $f(z) = \sum_{n=k}^{\infty} a_n ( z + \frac{\gamma}{1-\gamma} )^n$ establishes the identity $A_n = a_n(1-\gamma)^{-n}$.\vspace{1.2mm}
	
	Applying Theorem \ref{thm-3.3} to $f \in \mathcal{B}(\Omega_{\gamma})$, we have the sharp majorant inequality
	\[
	\sum_{n=k}^{\infty} \frac{|a_{n}|}{(1-\gamma)^{n}} \rho^{n} + \left( \frac{r^{-k}}{1+a} + \frac{\rho^{1-k}}{1-\rho} \right) \sum_{n=k}^{\infty} \frac{|a_{n}|^{2}}{(1-\gamma)^{2n}} \rho^{2n} \le 1.
	\]
	Substituting $A_n = a_n(1-\gamma)^{-n}$ directly yields
	\[
	\sum_{n=k}^{\infty} |A_n| \rho^n + \left( \frac{r^{-k}}{1+a} + \frac{\rho^{1-k}}{1-\rho} \right) \|f\|_\rho^2 \le 1,
	\]
	where $\|f\|_\rho^2 = \sum_{n=k}^{\infty} |A_n|^2 \rho^{2n}$. This holds for $\rho \le \rho_k(a)$, where $\rho_k(a) \in (0,1)$ is the unique positive root of $(2a^{2}-1)\rho^{k+1}-a(1+2a)\rho^{k}-(1+a)\rho+1+a=0$.\vspace{1.2mm}
	
	The sharpness follows by letting $\gamma \to 1^-$ for the conformal mapping 
	\begin{align*}
		f(z) = \sum_{n=k}^{\infty} C_n (z + \frac{\gamma}{1-\gamma})^n
	\end{align*} generated by the tracking parameters $C_k = a(1-\gamma)^k$ and $C_n = -(1-\gamma)^n a^{n-k-1}(1-a^2)$ for $n \ge k+1$, which strictly exceeds $1$ for any $\rho > \rho_k(a)$.
\end{proof}
\section{\bf Declarations}
\noindent\textbf{Compliance of Ethical Standards}\\

\noindent\textbf{Conflict of interest.} The authors declare that there is no conflict  of interest regarding the publication of this paper.\vspace{1.5mm}

\noindent\textbf{Data availability statement.}  Data sharing not applicable to this article as no datasets were generated or analyzed during the current study.\vspace{1.5mm}

\noindent\textbf{Funding.} No fund.

\end{document}